\documentclass[a4paper,12pt]{article}
\usepackage[top=2.5cm,bottom=2.5cm,left=2.5cm,right=2.5cm]{geometry}
\usepackage[margin=1cm,%
font=small,%
format=hang,%
labelsep=period,%
labelfont=bf]{caption}
\pagestyle{empty}
\usepackage{amsmath}
\usepackage{amsfonts}
\usepackage{amssymb}
\usepackage{cite}
\usepackage{amsthm}
\usepackage{makeidx}
\usepackage{graphicx}
\usepackage{mathrsfs,xcolor}
\usepackage{enumerate}
\usepackage{blkarray}
\usepackage{bm}
\usepackage{setspace}
\usepackage{multicol,multirow}
\usepackage{csquotes}


\usepackage{authblk}

\DeclareMathOperator{\supp}{supp}
\DeclareMathOperator{\Ima}{Im}

\DeclareMathOperator{\spn}{span}

\numberwithin{table}{section}
\numberwithin{equation}{section}

\theoremstyle{plain}
\newtheorem{theorem}{Theorem}
\newtheorem{lemma}{Lemma}
\newtheorem{proposition}{Proposition}
\newtheorem{corollary}{Corollary}

\theoremstyle{definition}
\newtheorem{definition}{Definition}
\newtheorem{example}{Example}
\newtheorem{remark}{Remark}

\usepackage{authblk}
\author[1,2]{ \textbf{Honeylou F. Farinas}}
 \author[2,3,4,5]{\textbf{Eduardo R. Mendoza}}
 \author[2,*]{\textbf{Angelyn R. Lao}}

\affil[1]{\small \textit{Department of Mathematics, Mariano Marcos State University, Ilocos Norte, 2906 Philippines}}
\affil[2]{\textit{Mathematics and Statistics Department, De La Salle University, Manila  0922, Philippines }}
\affil[3]{\textit{Center for Natural Sciences and Environmental Research, De La Salle University, Manila  0922, Philippines }}
\affil[4]{\textit{Max Planck Institute of Biochemistry, Martinsried near Munich, Germany}}
\affil[5]{\textit{Faculty of Physics, Ludwig Maximilian University, Munich 80539, Germany}}
\affil[*]{Corresponding author: \texttt{angelyn.lao@dlsu.edu.ph}}

\title{\vspace{3.5cm}\textbf{Chemical reaction network decompositions and realizations of S-systems}}


\begin{document}
\maketitle
\thispagestyle{empty}
\begin{abstract}
This paper presents novel decomposition classes of chemical reaction networks (CRNs) derived from S-system kinetics. Based on the network decomposition theory initiated by Feinberg in 1987, we introduce the concept of incidence independent decompositions and develop the theory of $\mathscr{C}$- and $\mathscr{C}^*$- decompositions which partition the set of complexes and the set of nonzero complexes respectively, including their structure theorems in terms of linkage classes. Analogous to Feinberg's independent decomposition, we demonstrate the important relationship between sets of complex balance equilibria for an incidence independent decomposition of weakly reversible subnetworks for any kinetics. We show that the  $\mathscr{C}^*$-decompositions are also incidence independent. We also introduce in this paper a new realization for an S-system that is analyzed using a newly defined class of species coverable CRNs. This led to the extension of the deficiency formula and characterization of fundamental decompositions of species decomposable reaction networks. 
\newline

Keywords: Chemical reaction network theory, S-system, network decomposition, subnetwork realization, species coverable CRNs, species decomposable CRN
\end{abstract}
\baselineskip=0.30in

\section{Introduction}
\label{intro}
S-systems consist of ordinary differential equations (ODEs) of the form
$$\dfrac{dX_i}{dt}=\alpha_i\displaystyle\prod_{j=1}^{m}X_j^{g_{ij}}-
\beta_i\displaystyle\prod_{j=1}^{m}X_j^{h_{ij}},\; i\in\{1,\dots,m\}$$
where $\alpha_i, \beta_i$ are nonnegative and the exponents $g_{ij},h_{ij}$ are arbitrary real numbers. For convenience, we assume that the variables are restricted to positive real values. They form a special class of power law dynamical systems, which are called Generalized Mass Action (GMA) systems in Biochemical Systems Theory (BST). $S$-systems were introduced in M. Savageau's seminal work in 1969 \cite{SAVA1969-1,SAVA1969-2,VOIT2000} and have been extensively applied in modeling complex biochemical systems in many fields, see e.g. Voit's review \cite{VOIT2012}. Various authors have studied chemical kinetic systems (CKS) which are realizations, i.e. dynamically equivalent or have the identical set of ODEs as the dynamical system \cite{HOJA1972, MURE2012, MURE2014, AJMM2015, AJLM2017}.

This paper presents novel concepts and results on decompositions of chemical reaction networks (CRN) that we derived from
\begin{itemize}
\item the analysis of a kinetic system realization of an $S$-system model of \textit{Mycobacterium tuberculosis (Mtb)} by Magombedze and Mulder \cite{MAMU2013, FML2020}, and
\item the study of kinetic system realizations of $S$-systems which are dominant subsystems of GMA systems in design space theory \cite{SCFTS2009}.
\end{itemize} 
 
In the first part of the paper, we introduce the concept of incidence independent decompositions of a CRN, which complements the independence property defined by M. Feinberg in 1987 \cite{FEIN1987}. A basic property of an incidence independent decomposition is the inequality: $\delta \geq \delta_1 + ...+ \delta_k$, where $\delta$ and $\delta_i$ denote the deficiency of the network and the $i^{th}$ subnetwork respectively.

We identify the important subset of $\mathscr{C}$-decompositions, which are those generated by partitions of the reaction set which are also partitions of the set of complexes. The best known example of a $\mathscr{C}$-decomposition is the set of linkage classes of a CRN. We provide a characterization of $\mathscr{C}$-decompositions in terms of linkage classes.

Feinberg demonstrated the importance of independent decompositions by stating the relationship of the sets of positive equilibria of the network and those of the subnetworks for any kinetics. We derive the analogous result for incidence independent decompositions of weakly reversible subnetworks and the sets of complex balanced equilibria. We conclude the first part with the study of $\mathscr{C^*}$-decompositions, which partition the nonzero complexes of the network and show that these are also incidence independent.

In the second part of the paper, we introduce a new realization for an $S$-system, which we call the subnetwork realization. This concept is motivated by studies of design spaces, where $S$-subsystems of a GMA system are used to analyze the system's behavior. To analyze the subnetwork realization and its predecessor, now called the independent realization of an $S$-system, we introduce the class of species coverable CRNs. We then extend the deficiency formula of Arceo et al. \cite{AJMM2015} and the characterization of fundamental decompositions by Hernandez et al. \cite{HMR2020} for the independent realization of an $S$-system to any species decomposable CRN.

The main new results of the paper are:
\begin{itemize}
\item the characterization of the structure of $\mathscr{C}$-decompositions (Theorem \ref{thm:Structure thm for C-decomposition});
\item the relationship between sets of complex balanced equilibria for an incidence independent decomposition of weakly reversible subnetworks for any kinetics (Theorems \ref{theorem:decomposition and complex balanced equilibria} and \ref{theorem:complex balanced equilibria});
\item the characterization of the structure of $\mathscr{C^*}$-decompositions (Theorem \ref{thm:STC*}) and its corollary (incidence independence of $\mathscr{C^*}$-decompositions); and
\item the extensions of the deficiency formula and characterization of fundamental decompositions to species decomposable reaction networks (Theorem \ref{thm:species decomposable CRNs}).
\end{itemize}

The paper is organized as follows: Section 2 collects the fundamental concepts and results on CRNs and kinetic systems needed in the latter sections. The result on incidence independent decompositions, $\mathscr{C}$-decompositions and $\mathscr{C^*}$-decompositions are derived and discussed in Section 3. Section 4 introduces the realizations of $S$-systems, species coverable and species decomposable CRNs. The final section provides a summary and outlook.


\section{Fundamentals of chemical reaction networks and kinetic systems}
\label{sec:2}

We recall the necessary concepts of chemical reaction networks and the mathematical notations used throughout the paper adopted from the papers \cite{AJMM2015}, \cite{FEIN1987} and \cite{FLRM2019}. 

We begin with the definition of a chemical reaction network.
\begin{definition}
A \textbf{chemical reaction network} is a triple $\mathscr{N}=(\mathscr{S},\mathscr{C},\mathscr{R})$ of three non-empty finite sets:
\begin{enumerate}
\item A set of $m$ \textbf{species} $\mathscr{S}$,
\item A set $\mathscr{C}$ of $n$ \textbf{complexes}, which are non-negative integer linear combinations of the species, and
\item A set $\mathscr{R} \subseteq \mathscr{C} \times \mathscr{C}$ of $n$ \textbf{reactions} such that
\begin{itemize}
\item $(i,i) \notin \mathscr{R}$ for all $ i \in \mathscr{C}$, and
\item for each $i \in \mathscr{C}$, there exists a $j \in \mathscr{C}$ such that $(i,j) \in \mathscr{R}$ or $(j,i) \in \mathscr{R}.$
\end{itemize}
\end{enumerate}

\end{definition}

Two useful maps are associated with each reaction:

\begin{definition}
The \textbf{reactant map} $\rho: \mathscr{R} \rightarrow \mathscr{C}$ maps a reaction to its reactant complex while the \textbf{product map} $\pi: \mathscr{R} \rightarrow \mathscr{C}$ maps it to its product complex. We denote $|\rho (\mathscr{R})|$ with $n_r$, i.e., the number of reactant complexes. 
\end{definition}

Connectivity concepts in Digraph Theory apply to CRNs, but have slightly differing names. A connected component is traditionally called a \textbf{linkage class}, denoted by $\mathscr{L}$, in CRNT. A subset of a linkage class where any two elements are connected by a directed path in each direction is known as a \textbf{strong linkage class}. If there is no reaction from a complex in the strong linkage class to a complex outside the same strong linkage class, then we have a \textbf{terminal strong linkage class}. We denote the number of linkage classes with $l$, that of the strong linkage classes with $sl$ and that of terminal strong linkage classes with $t$. Clearly, $sl \geq t \geq l.$ A CRN is said to be \textbf{weakly reversible} if $sl=l$ , and it is said to be \textbf{$t$-minimal} if $t=l$.

Many features of CRNs can be examined by working in terms of finite dimensional spaces $\mathbb{R}^{\mathscr{S}}, \mathbb{R}^{\mathscr{C}}, \mathbb{R}^{\mathscr{R}},$ which are referred to as species space, complex space and reaction space, respectively. We can view a complex $j \in \mathscr{C}$ as a vector in $\mathbb{R}^{\mathscr{C}}$ (called \textit{complex vector}) by writing $j = \sum _{s \in \mathscr{S}} j_s s,$ where $j_s$ is the stoichiometric coefficient of species $s$.

\begin{definition}
The \textbf{reaction vectors} of a CRN $(\mathscr{S}, \mathscr{C}, \mathscr{R})$ are the members of the set $\{j-i \in \mathbb{R}^{\mathscr{S}} | (i,j) \in \mathscr{R}\}.$ The \textbf{stoichiometric subspace} $S$ of the CRN is the linear subspace of $\mathbb{R}^{\mathscr{S}}$ defined by 
	$$S: \spn \{j-i \in \mathbb{R}^{\mathscr{S}} | (i,j) \in \mathscr{R}\}.$$
The \textbf{rank} of the CRN, $s$, is defined as $s=\dim S.$
\end{definition}

\begin{definition}
The \textbf{incidence map} $I_a: \mathbb{R}^{\mathscr{R}} \rightarrow \mathbb{R}^{\mathscr{C}}$ is defined as follows. For $f: \mathscr{R} \rightarrow \mathbb{R}$, then $I_a(f)(v) = - f(a)$ and $f(a)$ if $v = \rho(a)$ and $v = \pi(a)$, respectively, and are $0$ otherwise.
\end{definition}

\noindent Equivalently, it maps the basis vector $\omega_a$ to  $\omega_{v'} -  \omega_v$ if $a: v \rightarrow v'$.\\

It is clearly a linear map, and its matrix representation (with respect to the standard bases $\omega_a$, $\omega_{v}$) is called the \textbf{incidence matrix}, which can be described as 
\begin{center}
\[
 (I_a)_{i,j} = 
  \begin{cases} 
   -1 & \text{if } \rho(a_j) = v_i, \\
   1       & \text{if } \pi(a_j) = v_i,\\
   0		& \text{otherwise}.
  \end{cases}
\]
\end{center}
\noindent Note that in most digraph theory books, the incidence matrix is set as $-I_a$.\\
An important result of digraph theory regarding the incidence matrix is the following:

\begin{proposition}
\label{prop:IncidenceMatrix02}
Let $\mathscr{N}=(\mathscr{S},\mathscr{C},\mathscr{R})$ be a CRN. Denote by $I$ the incidence matrix of the directed graph $(\mathscr{C},\mathscr{R})$. Then rank $I = n -l$, where $n$ is the number of complexes and $l$ is the number of linkage classes of a CRN.
\end{proposition}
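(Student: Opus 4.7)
The plan is to identify the left null space of the incidence matrix $I$ and show that it has dimension exactly $l$, from which the rank-nullity theorem (applied to the transpose) gives rank $I = n - l$.

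First I would recall the key structural property of $I$: each column, corresponding to a reaction $a: v \to v'$, has exactly one entry equal to $-1$ (in the row indexed by $v$), exactly one entry equal to $+1$ (in the row indexed by $v'$), and zeros elsewhere. Because the only row operations that matter are those that respect reactant/product pairs, the rank is controlled by how complexes are grouped under the equivalence relation generated by the reactions, which is exactly the partition into linkage classes.

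Next I would establish the upper bound rank $I \leq n - l$ by exhibiting $l$ linearly independent vectors in the left null space of $I$. For each linkage class $\mathscr{L}_i$, consider the indicator vector $\mathbf{1}_{\mathscr{L}_i} \in \mathbb{R}^{\mathscr{C}}$ taking the value $1$ on complexes in $\mathscr{L}_i$ and $0$ elsewhere. For any reaction $a: v \to v'$, both $v$ and $v'$ lie in the same linkage class, so the column sum $\mathbf{1}_{\mathscr{L}_i}^T I_{\cdot, a} = (+1)(\mathbf{1}_{\mathscr{L}_i}(v')) + (-1)(\mathbf{1}_{\mathscr{L}_i}(v)) = 0$. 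Since the $\mathbf{1}_{\mathscr{L}_i}$ have disjoint supports, they are linearly independent, so the left null space has dimension at least $l$.

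Then I would show the matching lower bound by proving these exhaust the left null space. Suppose $y \in \mathbb{R}^{\mathscr{C}}$ satisfies $y^T I = 0$. Then for every reaction $a: v \to v'$ one has $y_{v'} - y_v = 0$, i.e., $y$ is constant along each reaction arrow. Since a linkage class is by definition a connected component of the underlying undirected graph $(\mathscr{C}, \mathscr{R})$, iterating this equality along paths shows $y$ is constant on each linkage class, hence $y \in \operatorname{span}\{\mathbf{1}_{\mathscr{L}_1}, \dots, \mathbf{1}_{\mathscr{L}_l}\}$. Therefore the left null space has dimension exactly $l$, and since rank $I^T$ = rank $I$ = $n - \dim(\ker I^T)$, we conclude rank $I = n - l$.

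The main subtlety, rather than a real obstacle, is justifying that the connectedness argument works for a \emph{directed} graph: one must note that the defining relation $y_v = y_{v'}$ for $v \to v'$ is symmetric in $v$ and $v'$, so constancy propagates through the undirected connectivity used to define linkage classes. Everything else is a direct application of the incidence structure.
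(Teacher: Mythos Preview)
Your proof is correct and is the standard argument: identify the left null space of $I$ with the span of the linkage-class indicator vectors, then apply rank--nullity. The paper itself does not supply a proof of this proposition; it is stated as a known result from digraph theory, so there is no in-paper argument to compare against.
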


A non-negative integer, called the deficiency, can be associated to each CRN. This number has been the center of many studies in CRNT due to its relevance in the dynamic behavior of the system.

\begin{definition}
The \textbf{deficiency} of a CRN is the integer $\delta = n-l-s.$ 
\end{definition}
 We can also define the deficiency not only for the whole network, but also for each linkage class $\mathscr{L}_i$. The \textbf{deficiency of linkage class $\mathscr{L}_i$} (denoted by $\delta_i$) is defined by the formula: $\delta_i=n_i-l_i-s_i=n_i-1-s_i.$
 
\begin{definition}
The \textbf{reactant subspace} $R$ is the linear space in $\mathbb{R}^\mathscr{S}$ generated by the reactant complexes. Its dimension, $\dim R$ denoted by $q$, is called the \textbf{reactant rank} of the network. Meanwhile, the \textbf{reactant deficiency} $\delta_p$ is the difference between the number of reactant complexes and the reactant rank, i.e., $\delta_p = n_r -q.$
\end{definition}

We now introduce the fundamentals of chemical kinetic systems. We begin with the general definitions of kinetics from Feliu and Wiuf \cite{FW2012}:

\begin{definition}
A \textbf{kinetics} for a CRN $(\mathscr{S}, \mathscr{C}, \mathscr{R})$ is an assignment of a rate function $K_j: \Omega_K \rightarrow \mathbb{R}_\geq$ to each reaction $r_j \in \mathscr{R}$, where $\Omega_K$ is a set such that $\mathbb{R}^\mathscr{S}_> \subseteq \Omega_K \subseteq \mathbb{R}^\mathscr{S}_\geq$, $c\wedge d \in \Omega_K$ whenever $c,d \in \Omega_K,$ and 
$$K_j(c) \geq 0, \quad \forall c \in \Omega_K.$$
A kinetics for a network $\mathscr{N}$ is denoted by $\displaystyle{K=(K_1,K_2,...,K_r):\Omega_K \to {\mathbb{R}}^{\mathscr{R}}_{\geq}}$. The pair $(\mathscr{N}, K)$ is called the \textbf{chemical kinetic system} (CKS).
\end{definition}

\noindent
In the definition, $c \wedge d$ is the bivector of $c$ and $d$ in the exterior algebra of $\mathbb{R}^\mathscr{S}.$ We add the definition relevant to our context:

\begin{definition}
A chemical kinetics is a kinetics $K$ satisfying the positivity condition: for each reaction $j:y\rightarrow y', K_j(c)>0$ if and only if the $\supp y\subset\supp c$.
\end{definition}

Once a kinetics is associated with a CRN, we can determine the rate at which the concentration of each species evolves at composition $c$.

\begin{definition}
The \textbf{species formation rate function (SFRF)}  of a CKS is the vector field  $f(x) = NK (x) = \displaystyle\sum_{y\rightarrow y'}K_{y\rightarrow y'}(x) (y'- y).$  The equation $\frac{dx}{dt} = f(x)$ is the \textbf{ODE or dynamical system} of the CKS.  A zero of $f$ is an element $c$ of $\mathbb R^{\mathscr{S}}$ such that $f(c) = 0.$ A zero of $f$ is an \textbf{equilibrium or steady state} of the ODE system. 
\end{definition}

\begin{definition}
The \textbf{set of positive equilibria} of a CKS $(\mathscr{N},K)$ is given by $$E_+(\mathscr{N},K)=\{x \in \mathbb{R}_>^{\mathscr{S}}| f(x)=0\}.$$
\end{definition}

\begin{definition}
A positive vector $c$ in $\mathbb{R}^{\mathscr{S}}$ is called \textbf{complex balanced} (CB) if $K(x)$
is contained in $\ker I_a$. Further, if $c$ is a positive equilibrium then we call it a complex
balanced equilibrium. We denote by $Z_+(\mathscr{N},K)$ the \textbf{set of complex balanced equilibria} of a CKS system $(\mathscr{N},K).$
\end{definition}

\section{Incidence independent decompositions of chemical reaction networks}
\label{sec:3}

Decomposition theory of CRNs was initiated by M. Feinberg in his 1987 review \cite{FEIN1987}, where he introduced the general definition of a decomposition and listed some of its basic properties. He identified the important subclass of independent decompositions and stated the relationship between positive equilibria sets of the network and subnetworks for such decompositions. We first review his results in the more general context of coverings and unions of CRNs.

\subsection{Coverings, unions and independent decompositions of Chemical Reaction Networks}
\label{sec:3.1}

In this section, we introduce the concept of a covering, a minor generalization of a decomposition, and relate it to the unions of CRNs.

\begin{definition}
Let $\mathscr{N}=(\mathscr{S},\mathscr{C},\mathscr{R})$ be a CRN. A \textbf{covering of $\mathscr{N}$} is a set of subsets of $\mathscr{R}_i$ whose union is $\mathscr{R}$. A covering is called a \textbf{decomposition of $\mathscr{N}$} if the sets $\mathscr{R}_i$ form a partition of $\mathscr{R}$. 
\end{definition}

Clearly, each $\mathscr{R}_i$ defines a subnetwork $\mathscr{N}_i$ of $\mathscr{N}$, namely $\mathscr{C}_i$  consisting of all complexes occurring in $\mathscr{R}_i$  and $\mathscr{S}_i$ consisting of all species occurring in $\mathscr{C}_i$.

In \cite{GHMS2019}, the concept of the union of chemical reaction networks was introduced as follows: 

\begin{definition}
The \textbf{union} of reaction networks $\mathscr{N}_1=(\mathscr{S}_1,\mathscr{C}_1,\mathscr{R}_1)$ and $\mathscr{N}_2=(\mathscr{S}_2,\mathscr{C}_2,\mathscr{R}_2)$ is 
$$\mathscr{N}_1 \cup \mathscr{N}_2=(\mathscr{S}_1 \cup \mathscr{S}_2,\mathscr{C}_1 \cup \mathscr{C}_2,\mathscr{R}_1 \cup \mathscr{R}_2.$$
The union of finitely many reaction networks $\mathscr{N}_i$ is defined similarly.
\end{definition} 

If $\mathscr{N}$ is the union of subnetworks $\mathscr{N}_i$, then clearly the reaction sets $\mathscr{R}_i$ form a covering of $\mathscr{N}.$  Conversely, under \enquote{normal condition} (as defined by the CRN properties assumed in the following Proposition), we have:

\begin{proposition}
 Let $\mathscr{N} = (\mathscr{S},\mathscr{C},\mathscr{R})$ be a CRN with the following properties:
\begin{enumerate}[i)]
\item Each complex $y$ occurs in at least one reaction, i.e. there are no isolated complexes.
\item Each species occurs in at least one complex, i.e. $S$ is the union of the $\supp y$, with $y \in \mathscr{C}.$
\end{enumerate}
If $\mathscr\{{R}_i\}$ is a covering of $\mathscr{N}$, then $\mathscr{N}$ is the union of subnetworks $\{\mathscr{N}_i\}$ defined by the covering.
\end{proposition}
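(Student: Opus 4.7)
The plan is to verify the three set-equalities that the union of the subnetworks demands, namely $\mathscr{R} = \bigcup \mathscr{R}_i$, $\mathscr{C} = \bigcup \mathscr{C}_i$, and $\mathscr{S} = \bigcup \mathscr{S}_i$. The first equality is immediate from the hypothesis that $\{\mathscr{R}_i\}$ is a covering of $\mathscr{N}$, so the real content is to show how the two ``normal conditions'' promote a covering of reactions to a covering of complexes and then to a covering of species. I expect no serious obstacle here; the proposition is essentially a bookkeeping statement and properties (i) and (ii) are exactly the hypotheses needed to close a potential gap.

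First I would handle the complexes. One inclusion, $\bigcup \mathscr{C}_i \subseteq \mathscr{C}$, follows directly from the definition of $\mathscr{C}_i$ as the set of complexes appearing in $\mathscr{R}_i \subseteq \mathscr{R}$. For the reverse inclusion, take any $y \in \mathscr{C}$. By property (i), there exists a reaction $r \in \mathscr{R}$ in which $y$ occurs as reactant or product. Since $\{\mathscr{R}_i\}$ covers $\mathscr{R}$, we have $r \in \mathscr{R}_i$ for some $i$, and hence $y \in \mathscr{C}_i$ by the definition of the subnetwork. This step is where property (i) is essential: without it, an isolated complex of $\mathscr{N}$ could fail to appear in any $\mathscr{C}_i$.

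Next I would carry out the analogous argument one level up, for the species. The inclusion $\bigcup \mathscr{S}_i \subseteq \mathscr{S}$ is again immediate from the definition. For the reverse inclusion, take $s \in \mathscr{S}$. By property (ii), $s$ belongs to $\supp y$ for some complex $y \in \mathscr{C}$. By the complex step just established, $y \in \mathscr{C}_i$ for some $i$, and therefore $s \in \mathscr{S}_i$. Property (ii) plays exactly the same role for species that property (i) plays for complexes: it is what prevents an ``orphan'' species from being missed by all the $\mathscr{S}_i$.

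Finally, I would assemble the three equalities to conclude that $\mathscr{N} = \bigcup_i \mathscr{N}_i$ in the sense of the union of CRNs defined above. The overall structure of the argument is a two-step lifting, reactions $\to$ complexes $\to$ species, and each step uses exactly one of the stated ``normal conditions.'' If I were looking for a subtlety, it would only be to confirm that the definition of $\mathscr{N}_i$ in the paper (complexes ``occurring in'' $\mathscr{R}_i$, species ``occurring in'' $\mathscr{C}_i$) includes both reactant and product complexes and covers both the stoichiometric coefficients and the support of complex vectors, but this is routine.
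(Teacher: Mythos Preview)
Your proposal is correct and follows essentially the same approach as the paper's proof: use the covering hypothesis for $\mathscr{R}$, then invoke property (i) to lift to $\mathscr{C}$, then invoke property (ii) to lift to $\mathscr{S}$. Your write-up is simply more explicit about the two inclusions at each level, but the argument is identical in substance.
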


\begin{proof}
By assumption, the union of the reaction subsets is $\mathscr{R}$. Since any complex of $\mathscr{N}$ must occur in at least one reaction, say in $\mathscr{R}_i$, then it is contained in $\mathscr{N}_i$. Hence the union of the $\mathscr{C}_i$ is $\mathscr{C}$.  Since any species must occur in at least one complex, then it is contained in an $\mathscr{S}_i$. Hence the union of the $\mathscr{S}_i$ is $\mathscr{S}$.  Thus, the whole CRN is the union of the subnetworks of the covering.
\end{proof}

This proposition formally justifies the term \enquote{covering of a network}. In the rest of the paper, we assume that the CRNs have these two typical properties, and hence, there is a correspondence between coverings of a network and its representation as a union of subnetworks. As in the case of decompositions, we have:
\begin{proposition}
If $\mathscr\{{R}_i\}$  is a network covering, then 
\begin{enumerate}[i)]
\item $S = S_1 + ... + S_k.$
\item $s \leq s_1 + ... + s_k$, where $s$, $s_i$ are the dimensions of the subspaces
\end{enumerate}
\end{proposition}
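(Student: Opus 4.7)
The plan is to observe that both statements reduce immediately to elementary facts about spans of subsets, using only the defining property of a covering, namely $\mathscr{R} = \mathscr{R}_1 \cup \cdots \cup \mathscr{R}_k$. No disjointness is needed, which is why the same argument works for coverings as for decompositions.

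For part (i), I would begin by writing down the generating set for each $S_i$, namely $\{y' - y : (y,y') \in \mathscr{R}_i\}$, and note that the generating set of $S$ is exactly $\{y' - y : (y,y') \in \mathscr{R}\}$. Since $\mathscr{R} = \bigcup_i \mathscr{R}_i$, the spanning set for $S$ is the union of the spanning sets for the $S_i$. The identity $\spn(A \cup B) = \spn(A) + \spn(B)$, iterated over $k$ sets, then yields $S = S_1 + \cdots + S_k$. Thus part (i) reduces to a one-line chain of equalities.

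For part (ii), I would invoke the standard linear-algebra inequality $\dim(V_1 + \cdots + V_k) \leq \dim V_1 + \cdots + \dim V_k$, applied to the subspaces $S_i \subseteq \mathbb{R}^{\mathscr{S}}$; combined with part (i), this gives $s = \dim S = \dim(S_1 + \cdots + S_k) \leq s_1 + \cdots + s_k$. One could also verify this inductively from the formula $\dim(U + V) = \dim U + \dim V - \dim(U \cap V)$, but it adds no content beyond the standard dimension bound.

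There is no real obstacle here; the only thing to be attentive to is that the argument does \emph{not} require the $\mathscr{R}_i$ to be disjoint. Consequently the proposition is a genuine generalization of the decomposition version and serves as the natural launching point for distinguishing coverings from partitions in later sections. The inequality in (ii) becomes an equality precisely when the sum $S_1 + \cdots + S_k$ is direct, which is the independence condition Feinberg singles out, so framing the proof this way also motivates the next definition in the paper.
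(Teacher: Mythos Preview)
Your argument is correct and is exactly the standard one: since $\mathscr{R}=\bigcup_i \mathscr{R}_i$, the reaction vectors of $\mathscr{N}$ are the union of those of the $\mathscr{N}_i$, so $S=\sum_i S_i$, and the dimension inequality is immediate. The paper in fact states this proposition without proof, treating it as an elementary observation carried over from the decomposition case; your write-up simply makes explicit what the authors leave implicit.
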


Feinberg identified the important subclass of independent decomposition:

\begin{definition}
A \textbf{decomposition is independent} if $S$ is the direct sum of the subnetworks´ stoichiometric subspaces $S_i$.  Equivalently, $s = s_1 + ... + s_k$.
\end{definition}

In \cite{FLRM2019}, Fortun et al. derived a basic property of independent decompositions:
\begin{proposition} If $\mathscr{N}  = \mathscr{N}_1 \cup... \cup \mathscr{N}_k$, is an independent decomposition, then $\delta \leq \delta_1 +... + \delta_k.$
\end{proposition}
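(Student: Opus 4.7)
The plan is to reduce the claim to a rank inequality for incidence matrices, so that the subtle bookkeeping for linkage classes is handled uniformly by Proposition \ref{prop:IncidenceMatrix02}. First I would write $\delta=n-l-s$ and $\delta_i=n_i-l_i-s_i$ and compute
$$\sum_{i=1}^{k}\delta_i-\delta=\Big(\sum_{i=1}^{k}n_i-n\Big)-\Big(\sum_{i=1}^{k}l_i-l\Big)-\Big(\sum_{i=1}^{k}s_i-s\Big).$$
The third bracket vanishes by the independence hypothesis $s=s_1+\cdots+s_k$, so the whole problem collapses to establishing
$$n-l\ \leq\ \sum_{i=1}^{k}(n_i-l_i).$$

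To prove this inequality, I would invoke Proposition \ref{prop:IncidenceMatrix02} twice: once to identify $n-l$ with the rank of the incidence map $I_a:\mathbb{R}^{\mathscr{R}}\rightarrow\mathbb{R}^{\mathscr{C}}$ of $\mathscr{N}$, and once per subnetwork to identify $n_i-l_i$ with the rank of the subnetwork incidence map $I_a^{(i)}:\mathbb{R}^{\mathscr{R}_i}\rightarrow\mathbb{R}^{\mathscr{C}_i}$. Since a decomposition partitions $\mathscr{R}=\mathscr{R}_1\sqcup\cdots\sqcup\mathscr{R}_k$, one has $\mathbb{R}^{\mathscr{R}}=\bigoplus_i\mathbb{R}^{\mathscr{R}_i}$, and for each basis reaction $a:y\rightarrow y'$ in $\mathscr{R}_i$ the image $\omega_{y'}-\omega_y$ lies in the subspace $\mathbb{R}^{\mathscr{C}_i}\hookrightarrow\mathbb{R}^{\mathscr{C}}$. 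Hence $\mathrm{Im}(I_a)$ is the sum of the images of the $I_a^{(i)}$ pushed forward by the inclusions $\mathbb{R}^{\mathscr{C}_i}\hookrightarrow\mathbb{R}^{\mathscr{C}}$, and subadditivity of rank gives
$$\mathrm{rank}\,I_a\ \leq\ \sum_{i=1}^{k}\mathrm{rank}\,I_a^{(i)},\qquad\text{i.e.,}\qquad n-l\ \leq\ \sum_{i=1}^{k}(n_i-l_i).$$

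Substituting back yields $\sum_i\delta_i-\delta\geq 0$, which is the desired conclusion. The only real obstacle is the linkage-class term $\sum_i l_i-l$, because complexes (and hence linkage classes) can be shared across subnetworks in uncontrolled ways; this is precisely what the incidence-rank viewpoint bypasses, since the inclusion maps $\mathbb{R}^{\mathscr{C}_i}\hookrightarrow\mathbb{R}^{\mathscr{C}}$ account for all overlaps automatically. A more combinatorial argument counting shared complexes and linkage-class mergers is possible but messier, so I would keep the presentation at the level of the incidence-map image.
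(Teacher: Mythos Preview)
Your argument is correct and matches the paper's approach. The paper does not spell out a proof of this particular proposition (it cites Fortun et al.), but it states as a separate proposition exactly the inequality $n-l\leq\sum_i(n_i-l_i)$ via $\mathrm{Im}\,I_a=\sum_i\mathrm{Im}\,I_{a,i}$, and its proof of the dual statement (Proposition~\ref{prop:deficiency of incidence independent}) follows the same subtraction pattern you use; your write-up simply makes the incidence-rank step explicit.
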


When studying decompositions of a network, a useful relationship is given by set-theoretic containment:

\begin{definition}
 If $\mathcal{P}=\{\mathcal{P}_i\}$ and $\mathcal{P}'=\{\mathcal{P}'_j\}$ are partitions of a set, then $\mathcal{P}$ is a \textbf{refinement} of $\mathcal{P}'$ if each $\mathcal{P}_i$ is contained in (exactly) one $\mathcal{P}'_j$. 
\end{definition}
 
It is easy to show that this property is equivalent to each $\mathcal{P}'$ being the disjoint union of some $\mathcal{P}_i$'s. We also say the $\mathcal{P}$ is \textbf{finer} than $\mathcal{P}'$, $\mathcal{P}'$ is \textbf{coarser} than $\mathcal{P}$ and $\mathcal{P}'$ is a \textbf{coarsening} of $\mathcal{P}$.
 
\begin{proposition}
\label{prop: independent coarsening}
If a decomposition is independent, then any coarsening of the decomposition is independent.
\end{proposition}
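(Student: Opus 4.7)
The plan is to reduce the statement to a dimension count that uses the direct sum structure already guaranteed by independence. Let $\{\mathscr{R}_i\}_{i=1}^{k}$ be the given independent decomposition of $\mathscr{N}$, with associated stoichiometric subspaces $S_1,\dots,S_k$, so that $S=S_1\oplus\cdots\oplus S_k$ and hence $s=s_1+\cdots+s_k$. Let $\{\mathscr{R}'_j\}_{j=1}^{m}$ be a coarsening; by the remark following the definition of refinement, each $\mathscr{R}'_j$ is a disjoint union $\bigsqcup_{i\in I_j}\mathscr{R}_i$, where $\{I_j\}_{j=1}^m$ is a partition of $\{1,\dots,k\}$.

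Next I would identify the stoichiometric subspace of each block of the coarsening. Because the reaction vectors spanning $S'_j$ are exactly those spanning the $S_i$ with $i\in I_j$, we have $S'_j=\sum_{i\in I_j}S_i$. Now the subspaces $\{S_i : i\in I_j\}$ are a subfamily of the family $\{S_1,\dots,S_k\}$, which is in direct sum position inside $S$; any subfamily of a collection of subspaces whose total sum is direct is itself in direct sum position. Therefore $S'_j=\bigoplus_{i\in I_j}S_i$ and $s'_j=\sum_{i\in I_j}s_i$.

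Summing over $j$ and using that $\{I_j\}$ partitions $\{1,\dots,k\}$,
\[
s'_1+\cdots+s'_m\;=\;\sum_{j=1}^{m}\sum_{i\in I_j}s_i\;=\;\sum_{i=1}^{k}s_i\;=\;s.
\]
By the preceding proposition on coverings we always have $s\le s'_1+\cdots+s'_m$; the equality just obtained therefore forces $s=s'_1+\cdots+s'_m$, which is exactly the independence of the coarsening $\{\mathscr{R}'_j\}$.

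There is no serious obstacle here: the argument is essentially a routine fact from linear algebra, that selecting a subfamily out of a direct-sum family again yields a direct sum, combined with the general upper bound $s\le\sum s'_j$ valid for every covering. The only point that deserves care is the passage $S'_j=\sum_{i\in I_j}S_i$, which relies on the fact that a coarsening is obtained by unions of full blocks of the finer partition (so no reactions are added or split); this is ensured by the definition of refinement and would be the one line to state explicitly before invoking the direct-sum computation.
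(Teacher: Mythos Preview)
Your proof is correct and follows essentially the same idea as the paper: both arguments rest on the observation that each coarse block's stoichiometric subspace is $S'_j=\sum_{i\in I_j}S_i$, which inherits the direct-sum structure from the independent refinement. The only cosmetic difference is that the paper concludes by showing directly that an element common to two coarse subspaces must be zero, whereas you finish with a dimension count $\sum_j s'_j=\sum_i s_i=s$; your route has the minor advantage of sidestepping the pairwise-versus-full direct-sum subtlety.
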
 
 
\begin{proof}
 Suppose $x$ is in the intersection of the stoichiometric subspaces of two subnetworks of a coarsening. Since each stoichiometric subspace is the direct sum of subspaces from the independent refinement, then the $x$ is the sum of elements from each subnetwork. It follows that $x=0$.
\end{proof}

\subsection{Incidence independent decompositions and their basic properties}

We now introduce the new concept of an incidence independent decomposition, which naturally complements the independence property. Our starting point is the following basic observation:

\begin{proposition}  
If $\mathscr\{{R}_i\}$  is a network covering, then 
\begin{enumerate}[i)]
\item $\Ima I_a = \Ima I_{a,1} + ... + \Ima I_{a,k}$
\item $n -  l \leq (n_1-l_1)+... + (n_k-l_k),$ where $n-l, n_i-l_i$ are the dimensions of the subspaces
\end{enumerate}
\end{proposition}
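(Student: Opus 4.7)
The plan is to mirror the structure of the earlier proposition about stoichiometric subspaces, replacing the reaction vectors $y'-y \in \mathbb{R}^{\mathscr{S}}$ with the incidence vectors $\omega_{v'}-\omega_v \in \mathbb{R}^{\mathscr{C}}$ and invoking Proposition \ref{prop:IncidenceMatrix02} at the end for the dimension count. The two ingredients are (a) the explicit spanning description of $\Ima I_a$ and (b) the natural embedding $\mathbb{R}^{\mathscr{C}_i}\hookrightarrow \mathbb{R}^{\mathscr{C}}$ induced by $\mathscr{C}_i\subseteq \mathscr{C}$.

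For part (i), I would recall from the definition of $I_a$ that it sends the basis vector $\omega_a$ to $\omega_{v'}-\omega_v$ whenever $a\colon v\to v'$. Consequently
\[
\Ima I_a = \spn\{\omega_{v'}-\omega_v : (v,v')\in \mathscr{R}\}
\qquad\text{and}\qquad
\Ima I_{a,i} = \spn\{\omega_{v'}-\omega_v : (v,v')\in \mathscr{R}_i\},
\]
the latter viewed inside $\mathbb{R}^{\mathscr{C}}$ via the inclusion $\mathbb{R}^{\mathscr{C}_i}\hookrightarrow \mathbb{R}^{\mathscr{C}}$. Because $\mathscr{R}=\mathscr{R}_1\cup\cdots\cup\mathscr{R}_k$, the spanning set on the left is precisely the union of the spanning sets on the right, so the span of the union equals the sum of the individual spans, giving $\Ima I_a = \Ima I_{a,1}+\cdots+\Ima I_{a,k}$.

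For part (ii), I would take dimensions in (i) and use the standard inequality $\dim(U_1+\cdots+U_k)\leq \dim U_1+\cdots+\dim U_k$ together with Proposition \ref{prop:IncidenceMatrix02}, which gives $\dim \Ima I_a = n-l$ and $\dim \Ima I_{a,i}=n_i-l_i$. Concretely,
\[
n-l \;=\; \dim \Ima I_a \;=\; \dim(\Ima I_{a,1}+\cdots+\Ima I_{a,k}) \;\leq\; \sum_{i=1}^{k}\dim \Ima I_{a,i} \;=\; \sum_{i=1}^{k}(n_i-l_i).
\]

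There is no real obstacle here; the only subtlety to flag is the embedding issue in part (i), i.e.\ that each $\Ima I_{a,i}$ originally lives in $\mathbb{R}^{\mathscr{C}_i}$ and must be identified with its image in $\mathbb{R}^{\mathscr{C}}$ so that the sum on the right hand side makes sense. Once this identification is made explicit, both parts reduce to the elementary span/dimension facts above, and the analogy with the stoichiometric-subspace proposition becomes transparent; in particular, the gap $\sum(n_i-l_i)-(n-l)$ measures exactly the failure of the sum in (i) to be direct, which motivates the notion of incidence independent decomposition to be introduced next.
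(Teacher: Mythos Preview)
The paper does not actually supply a proof of this proposition; it presents it as a ``basic observation'' immediately preceding the definition of incidence independence, in direct analogy with the earlier (also unproved) proposition on stoichiometric subspaces. Your argument is correct and is precisely the natural justification one would expect the authors to have in mind: write $\Ima I_a$ as the span of the incidence vectors $\omega_{v'}-\omega_v$ over $(v,v')\in\mathscr{R}$, split the spanning set according to the covering $\mathscr{R}=\bigcup_i\mathscr{R}_i$, and then apply Proposition~\ref{prop:IncidenceMatrix02} for the dimension count. Your remark about the embedding $\mathbb{R}^{\mathscr{C}_i}\hookrightarrow\mathbb{R}^{\mathscr{C}}$ is the one point worth making explicit, and you handle it appropriately.
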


The analogous concept to independence is the following:

\begin{definition}
A decomposition \{$\mathscr{N}_1,...,\mathscr{N}_k\}$ of a CRN is \textbf{incidence independent} if and only if the image of the incidence map of $\mathscr{N}$ is the direct sum of the images of the incidence maps of the subnetworks. 
\end{definition} 

Since the direct sum property of the images is equivalent to the dimension of the image of $I_a$ which is equal to the sum of the dimensions of the subnetwork incident map images, an equivalent formulation is the following equality: 
\begin{equation}
\label{eqn:incidence independence}
n-l = \sum(n_i-l_i).
\end{equation}

\begin{example}
The linkage classes form the primary example of an incidence independent decomposition, since $n=\sum n_i$ and $l=\sum l_i$. In fact, the linkage class decompositions belong to the important subclass of $\mathscr{C}$-decompositions discussed in the next section. 
\end{example}

We have the following analogue of the result of Fortun et al. \cite{FLRM2019}, a property familiar from linkage classes:

\begin{proposition}
\label{prop:deficiency of incidence independent}
For an incidence independent decomposition $\mathscr{N}=\mathscr{N}_1 \cup...\cup\mathscr{N}_k$, then $\delta \geq \delta_1+...+\delta_k$.
\end{proposition}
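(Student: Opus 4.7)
The plan is to combine three ingredients already available in the excerpt: the deficiency formula $\delta = n - l - s$, the defining equality of incidence independence $n - l = \sum_i (n_i - l_i)$, and the subadditivity of the stoichiometric subspace dimensions $s \leq s_1 + \cdots + s_k$ that was established for any network covering (and therefore holds a fortiori for any decomposition, in particular an incidence independent one).

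First I would write out $\delta$ using its definition and substitute the incidence independence equality for $n-l$, turning the expression into $\sum_i (n_i - l_i) - s$. Next I would apply the inequality $s \leq \sum_i s_i$ from the covering proposition; since $s$ enters with a minus sign, replacing it by the larger quantity $\sum_i s_i$ yields the lower bound $\sum_i (n_i - l_i) - \sum_i s_i$. Finally I would regroup the sum termwise as $\sum_i (n_i - l_i - s_i) = \sum_i \delta_i$, which gives the desired inequality $\delta \geq \delta_1 + \cdots + \delta_k$.

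In summary, the chain of (in)equalities I would present is
\begin{equation*}
\delta \;=\; n - l - s \;=\; \sum_{i=1}^{k}(n_i - l_i) - s \;\geq\; \sum_{i=1}^{k}(n_i - l_i) - \sum_{i=1}^{k} s_i \;=\; \sum_{i=1}^{k}\delta_i .
\end{equation*}
There is no real obstacle here: the entire content of the proposition is that incidence independence converts the (generically possible) strict inequality $n - l \leq \sum (n_i - l_i)$ into an equality, while the stoichiometric inequality $s \leq \sum s_i$ continues to hold, and these flip direction between the $n-l$ term and the $s$ term in $\delta = n - l - s$. The only point worth flagging explicitly is that the direction of the resulting inequality is opposite to the one in the independent-decomposition case (Proposition from \cite{FLRM2019}); this is precisely why incidence independence is the natural complementary notion to independence, and it mirrors the familiar fact that $\delta \geq \sum_i \delta_i$ holds for the linkage class decomposition.
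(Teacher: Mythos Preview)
Your proof is correct and follows essentially the same approach as the paper's own proof: both combine the incidence independence equality $n-l = \sum_i (n_i - l_i)$ with the general subadditivity $s \leq \sum_i s_i$ and subtract to obtain the deficiency inequality. Your write-up is simply a more explicit rendering of the paper's one-sentence argument.
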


\begin{proof}
Since for any decomposition, $s \leq \sum s_i$, subtracting the LHS from $n-l$ and the RHS from $\sum n_i -\sum l_i$ delivers the claim.
\end{proof}

We have the following proposition which is analogous to Proposition \ref{prop: independent coarsening}:

\begin{proposition}
\label{prop:incidence independent coarsening}
If a decomposition is incidence independent, then any coarsening of the decomposition is incidence independent.
\end{proposition}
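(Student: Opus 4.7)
The plan is to mirror the structure of the proof of Proposition \ref{prop: independent coarsening}, but working with images of incidence maps rather than stoichiometric subspaces. First I would fix notation: let $\{\mathscr{N}_1,\dots,\mathscr{N}_k\}$ be the given incidence independent decomposition, with reaction partition $\{\mathscr{R}_1,\dots,\mathscr{R}_k\}$, and let $\{\mathscr{N}'_1,\dots,\mathscr{N}'_m\}$ be a coarsening corresponding to a partition $\{I_1,\dots,I_m\}$ of $\{1,\dots,k\}$ with $\mathscr{R}'_j=\bigsqcup_{i\in I_j}\mathscr{R}_i$. Each $\mathscr{N}'_j$ is then the union of the subnetworks $\{\mathscr{N}_i\}_{i\in I_j}$, and $\{\mathscr{R}_i\}_{i\in I_j}$ is itself a (finer) decomposition of $\mathscr{N}'_j$.

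The next step is to invoke part (i) of the network covering Proposition at two scales. Applied to the coarsening as a covering of $\mathscr{N}$, it gives $\Ima I_a=\sum_j \Ima I'_{a,j}$, so it suffices to verify that this sum is direct. Applied to $\{\mathscr{N}_i\}_{i\in I_j}$ as a covering of $\mathscr{N}'_j$, it gives the key identity
\begin{equation*}
\Ima I'_{a,j}=\sum_{i\in I_j}\Ima I_{a,i}.
\end{equation*}

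With these two ingredients, the direct sum argument is a short linear algebra check. Suppose $\sum_{j=1}^{m} v'_j=0$ with $v'_j\in \Ima I'_{a,j}$. Using the identity above, write each $v'_j=\sum_{i\in I_j} v_i$ with $v_i\in \Ima I_{a,i}$. Because the sets $I_j$ partition $\{1,\dots,k\}$, collecting terms yields $\sum_{i=1}^{k} v_i=0$, an equation in the direct sum $\Ima I_a=\bigoplus_{i=1}^{k}\Ima I_{a,i}$ guaranteed by the incidence independence of the finer decomposition. Hence every $v_i=0$, so every $v'_j=0$, and the sum $\sum_j \Ima I'_{a,j}$ is direct, which is precisely incidence independence of the coarsening.

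I expect the only mild obstacle to be bookkeeping rather than mathematics: one must be careful that $\mathscr{N}'_j$ really is the union of the $\mathscr{N}_i$ with $i\in I_j$ (not just that their reaction sets match), so that the covering Proposition legitimately delivers $\Ima I'_{a,j}=\sum_{i\in I_j}\Ima I_{a,i}$. This reduces to the standing assumption that complexes and species of each subnetwork are those induced by its reactions, which is exactly the \emph{normal condition} adopted earlier in the paper. Once this is noted, no further work is required. As a byproduct the proof also shows $\dim \Ima I'_{a,j}=\sum_{i\in I_j}\dim \Ima I_{a,i}$, i.e.\ $n'_j-l'_j=\sum_{i\in I_j}(n_i-l_i)$, which is a refinement of the equality in~\eqref{eqn:incidence independence} at the level of each coarsening block.
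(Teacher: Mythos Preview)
Your proposal is correct and follows essentially the same approach as the paper, which simply states that the argument of Proposition~\ref{prop: independent coarsening} carries over verbatim with the stoichiometric subspaces replaced by the images of the incidence maps. Your write-up is in fact a more careful and complete execution of that idea, invoking the covering proposition explicitly at both scales and using the standard ``sum equals zero forces each summand to vanish'' characterization of a direct sum rather than the pairwise-intersection phrasing used in the paper's proof of Proposition~\ref{prop: independent coarsening}.
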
 

The proof has the same argumentation as in Proposition \ref{prop: independent coarsening} now applied to the image of the incidence map instead of the stoichiometric subspace.

\begin{definition}
A decomposition is \textbf{bi-independent} if it is both independent and incidence independent.
\end{definition}

The independent linkage class (ILC) property of linkage classes is the best known example of a bi-independent decomposition.

The relationship between bi-independence of a decomposition $\mathscr{N}  = \mathscr{N}_1 \cup ... \cup \mathscr{N}_k$, and $\delta = \delta_1 + ...+ \delta_k$ is expressed in the following Proposition:

\begin{proposition}
A decomposition $\mathscr{N}=\mathscr{N}_1 \cup ... \cup \mathscr{N}_k$ is independent or incidence independent and $\sum \delta_i=\delta$ iff $\mathscr{N}=\mathscr{N}_1 \cup ... \cup \mathscr{N}_k$ is bi-independent.
\end{proposition}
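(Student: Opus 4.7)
The plan is to reduce the equivalence to a single arithmetic identity that measures the two non-negative ``defects'' of the decomposition. Define
\[
A := \sum_{i=1}^{k} s_i - s, \qquad B := \sum_{i=1}^{k}(n_i - l_i) - (n - l).
\]
By the two covering propositions already established, $A \geq 0$ and $B \geq 0$; moreover, independence is exactly the statement $A=0$, and incidence independence (by the equivalent formulation \eqref{eqn:incidence independence}) is exactly $B=0$. Hence bi-independence is precisely the conjunction $A=B=0$.

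Next I would expand $\delta = n - l - s$ and $\delta_i = n_i - l_i - s_i$ and rearrange to obtain the key identity
\[
\sum_{i=1}^{k}\delta_i - \delta \;=\; B - A.
\]
This identity is a pure bookkeeping step, but it drives both directions of the proposition. For the $(\Leftarrow)$ direction, bi-independence gives $A = B = 0$, so $\sum_i \delta_i = \delta$ and trivially the decomposition is both independent and incidence independent, so each of the two disjuncts in the hypothesis holds.

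For the $(\Rightarrow)$ direction, assume $\sum_i \delta_i = \delta$; the identity forces $A = B$. If the additional hypothesis is independence, then $A = 0$, hence $B = 0$, so incidence independence follows and we have bi-independence. Symmetrically, if the additional hypothesis is incidence independence, then $B = 0$ forces $A = 0$, giving independence and therefore bi-independence. I do not expect any real obstacle: the content of the proposition is the observation that the deficiency equality $\sum_i \delta_i = \delta$ is exactly the balance $A = B$, so once either defect is known to vanish (by one of the two hypotheses), the other must vanish too. The only point I would be careful about is the correct sign in the identity $\sum_i \delta_i - \delta = B - A$, which follows from $\sum_i n_i - n = \sum_i l_i - l + B$ being mirrored by $\sum_i s_i - s = A$ with opposite sign in $\delta$.
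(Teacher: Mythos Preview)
Your proof is correct and is essentially the same argument as the paper's: both rest on the identity $\sum_i \delta_i - \delta = B - A$ (the paper phrases it as ``adding $s = \sum s_i$ or $n-l = \sum(n_i-l_i)$ to the deficiency equality''), from which either $A=0$ or $B=0$ forces the other. Your explicit introduction of the non-negative defects $A$ and $B$ is a clean packaging of exactly what the paper does more tersely.
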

 
\begin{proof}
 \enquote{$\Leftarrow$} follows from combining the deficiency inequalities for independence and incidence independence. 
 For \enquote{$\Rightarrow$}, if  $\delta = \delta_1 +... + \delta_k$  and the decomposition is independent (or incidence independent), then adding $s = s_1 +... + s_k$ (or $n-l =(n_1-l_1)+ ... + (n_k-l_k$)) to the deficiency equality yields bi-independence.
 \end{proof}
 
\begin{corollary} 
Let $\mathscr{N}=(\mathscr{S}, \mathscr{C}, \mathscr{R})$ be a CRN, and $\mathscr{N}_i= (\mathscr{S}_i, \mathscr{C}_i, \mathscr{R}_i), i=1,...,k$ be the deficiency zero subnetworks of a decomposition which is independent or incidence independent. Then $\delta=0$ iff the decomposition is bi-independent.
\end{corollary}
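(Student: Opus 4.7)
The plan is to derive the corollary directly from the preceding Proposition by specializing to the case where each subnetwork has deficiency zero, so that $\sum \delta_i = 0$.

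First I would handle the forward direction. Assume the decomposition is independent or incidence independent and $\delta = 0$. Since each $\mathscr{N}_i$ has $\delta_i = 0$, we have $\sum_{i=1}^k \delta_i = 0 = \delta$. Together with the assumed one-sided independence, the hypothesis of the preceding Proposition is satisfied, and we conclude that the decomposition is bi-independent.

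For the converse, assume the decomposition is bi-independent. Then by definition $s = s_1 + \cdots + s_k$ and $n - l = (n_1 - l_1) + \cdots + (n_k - l_k)$. Substituting into the deficiency formula,
\[
\delta = (n - l) - s = \sum_{i=1}^k (n_i - l_i) - \sum_{i=1}^k s_i = \sum_{i=1}^k \delta_i = 0,
\]
where the last equality uses the hypothesis $\delta_i = 0$ for all $i$.

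There is essentially no obstacle here: the argument is a two-line specialization of the preceding Proposition, with the only observation being that the deficiency-zero hypothesis on the subnetworks collapses $\sum \delta_i = \delta$ to $\delta = 0$. The whole corollary is really a bookkeeping statement packaging the Proposition in the form most relevant for applications, where deficiency-zero building blocks are used to certify that the global deficiency vanishes precisely when the decomposition respects both the stoichiometric and the incidence structure.
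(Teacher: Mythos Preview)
Your proof is correct and follows the same approach as the paper: both arguments reduce the corollary to the preceding Proposition by noting that the deficiency-zero hypothesis gives $\sum \delta_i = 0$, so that $\delta = 0$ is exactly the condition $\delta = \sum \delta_i$. Your converse direction unpacks the definitions of independence and incidence independence explicitly, whereas the paper simply cites the Proposition in both directions, but this is only a cosmetic difference.
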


\begin{proof}
$\delta=0=\sum \delta_i$ and independence or incidence independence $\Leftrightarrow$ bi-independence according to the previous proposition.
\end{proof}

\subsection{The subset of $\mathscr{C}$-decompositions}
We now study an important subset of incident independent decompositions, the $\mathscr{C}$-decompositions. Recall that for a decomposition $\mathscr{N}  = \mathscr{N}_1 \cup ... \cup \mathscr{N}_k,$ $\mathscr{C}_i$ denotes the set of complexes occurring in the reaction set $\mathscr{R}_i$, i.e. $y \in \mathscr{C}_i$ iff there is a reaction $r \in \mathscr{R}_i$  such that $y$ is the reactant or product of the reaction $r$.

\begin{definition}
A decomposition $\mathscr{N}=\mathscr{N}_1\cup \mathscr{N}_2\cup...\cup \mathscr{N}_k$ with $\mathscr{N}_i=(\mathscr{S}_i,\mathscr{C}_i,\mathscr{R}_i)$ is a \textbf{$\mathscr{C}$-decomposition} if $\mathscr{C}_i \cap \mathscr{C}_j =\emptyset $ for $i \neq j$.
\end{definition}

A $\mathscr{C}$-decomposition partitions not only the set of reactions but also the set of complexes. The primary example of a $\mathscr{C}$-decomposition are the linkage classes. Linkage classes, in fact, essentially determine the structure of a $\mathscr{C}$-decomposition. We present this Structure Theorem for a $\mathscr{C}$-decomposition: 

\begin{theorem} (Structure Theorem for $\mathscr{C}$-decomposition)
\label{thm:Structure thm for C-decomposition}
Let $\mathscr{L}_1,...,\mathscr{L}_l$ be the linkage classes of a network $\mathscr{N}$. A decomposition $\mathscr{N}=\mathscr{N}_1 \cup \mathscr{N}_2\cup...\cup \mathscr{N}_k$ is a $\mathscr{C}$-decomposition if and only if each $\mathscr{N}_i$ is the union of linkage classes and each linkage class is contained in only one $\mathscr{N}_i$. In other words, the linkage class decomposition is a refinement of $\mathscr{N}$.
\end{theorem}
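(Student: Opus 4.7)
The plan is to prove the two implications directly, exploiting the fact that the linkage classes $\mathscr{L}_1,\dots,\mathscr{L}_l$ of $\mathscr{N}$ are the connected components of the underlying undirected graph of $(\mathscr{C},\mathscr{R})$ and therefore partition both $\mathscr{C}$ and $\mathscr{R}$.

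For the easier direction ($\Leftarrow$), suppose each $\mathscr{N}_i$ is a union of full linkage classes and each linkage class sits inside exactly one $\mathscr{N}_i$. Then $\mathscr{C}_i$ is precisely the disjoint union of the complex sets of the linkage classes assigned to $\mathscr{N}_i$. Since the complex sets of distinct linkage classes are pairwise disjoint, it follows that $\mathscr{C}_i\cap\mathscr{C}_j=\emptyset$ whenever $i\neq j$, which is exactly the $\mathscr{C}$-decomposition property.

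For the forward direction ($\Rightarrow$), assume $\{\mathscr{R}_i\}$ is a $\mathscr{C}$-decomposition and fix a linkage class $\mathscr{L}$. I would first show, by contradiction, that all complexes of $\mathscr{L}$ lie in a single $\mathscr{C}_i$. Suppose two complexes $y,y'\in\mathscr{L}$ satisfy $y\in\mathscr{C}_i$ and $y'\in\mathscr{C}_j$ with $i\neq j$. By connectedness of $\mathscr{L}$ in the underlying undirected graph, there is a finite walk in $\mathscr{R}$ from $y$ to $y'$; along this walk some single reaction $r$ must connect a complex in $\mathscr{C}_a$ to a complex in $\mathscr{C}_b$ with $a\neq b$. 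But $r$ lies in a unique $\mathscr{R}_m$ of the decomposition, and both endpoints of a reaction in $\mathscr{R}_m$ belong to $\mathscr{C}_m$; this forces $a=m=b$, contradicting $\mathscr{C}_a\cap\mathscr{C}_b=\emptyset$. Hence $\mathscr{L}\cap\mathscr{C}\subseteq\mathscr{C}_i$ for a unique $i$. Next, every reaction of $\mathscr{L}$ has both endpoints in $\mathscr{C}_i$, and since the $\mathscr{C}_j$'s are pairwise disjoint, no $\mathscr{R}_j$ with $j\neq i$ can contain such a reaction; therefore the reactions of $\mathscr{L}$ also lie in $\mathscr{R}_i$. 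Conversely, because the complexes of $\mathscr{N}_i$ are partitioned among the linkage classes wholly contained in it, and reactions of $\mathscr{N}_i$ connect only these complexes, $\mathscr{N}_i$ coincides with the union of those linkage classes.

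The only nontrivial step is the contradiction inside the forward direction, namely translating the abstract disjointness condition $\mathscr{C}_i\cap\mathscr{C}_j=\emptyset$ into a statement about a walk inside a linkage class; I expect this to be routine once the observation that linkage classes are undirected connected components is invoked. No delicate computation is required.
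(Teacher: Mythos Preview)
Your proof is correct and follows essentially the same approach as the paper's own argument: both directions use the connectivity of linkage classes together with the observation that a reaction in $\mathscr{R}_m$ has both endpoints in $\mathscr{C}_m$, yielding a contradiction if a linkage class were split between two subnetworks. The paper phrases the forward direction in terms of adjacent reactions sharing a complex rather than a walk between complexes, but this is only a cosmetic difference; your walk argument is in fact a slightly cleaner packaging of the same idea.
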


\begin{proof}
Clearly, if the linkage classes form a refinement of $\mathscr{N}$, then $\mathscr{N}$ is a $\mathscr{C}$-decomposition. To see the converse, let $\mathscr{N}_i=(\mathscr{S}_i,\mathscr{C}_i,\mathscr{R}_i)$ and $\mathscr{L}_j=(\mathscr{S}_{\mathscr{L}_j},\mathscr{C}_{\mathscr{L}_j},\mathscr{R}_{\mathscr{L}_j})$ where $\mathscr{R}_i$ is the union (taken over $j$) of $(\mathscr{R}_i \cap \mathscr{R}_{\mathscr{L}_j})$. We only need to show that each non-empty intersection is equal to $\mathscr{R}_{\mathscr{L}_j}$, ( i.e., $\mathscr{R}_{\mathscr{L}_j}=\mathscr{R}_i \cap \mathscr{R}_{\mathscr{L}_j}$) to imply that each linkage class is contained in only one $\mathscr{N}_i$. If the linkage class $\mathscr{L}_j$ has only one reaction then $\mathscr{R}_i \cap \mathscr{R}_{\mathscr{L}_j}=\mathscr{R}_{\mathscr{L}_j}$. If the linkage class $\mathscr{L}_j$ has at least two reactions, then there is an adjacent reaction to each reaction, whose reactant complex or product complex is common with the first reaction. If this adjacent reaction belongs to a different subnetwork, then there exists a complex which is common to two different subnetworks. This would contradict that $\mathscr{N}$ partitions the set of complexes. Hence, all reactions of the linkage class lie in the intersection with $\mathscr{R}_i$.
\end{proof}

\begin{corollary}
For a $\mathscr{C}$-decomposition $\mathscr{N}=\mathscr{N}_1 \cup \mathscr{N}_2\cup...\cup\mathscr{N}_k$, $k \leq l$. 
\end{corollary}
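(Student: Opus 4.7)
The plan is to derive this bound as an immediate consequence of the Structure Theorem for $\mathscr{C}$-decompositions (Theorem \ref{thm:Structure thm for C-decomposition}). That theorem says that for a $\mathscr{C}$-decomposition $\mathscr{N} = \mathscr{N}_1 \cup \cdots \cup \mathscr{N}_k$, each linkage class $\mathscr{L}_j$ is contained in exactly one subnetwork $\mathscr{N}_i$, and each $\mathscr{N}_i$ is a union of linkage classes. In other words, the assignment $\mathscr{L}_j \mapsto \mathscr{N}_{\iota(j)}$, where $\iota(j)$ is the unique index with $\mathscr{R}_{\mathscr{L}_j} \subseteq \mathscr{R}_{\iota(j)}$, is a well-defined map from $\{1,\dots,l\}$ to $\{1,\dots,k\}$.

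Next, I would observe that this map is surjective. Indeed, each subnetwork $\mathscr{N}_i$ is nonempty by definition of a decomposition (since the $\mathscr{R}_i$ form a partition of $\mathscr{R}$, and each block of a partition is nonempty), so $\mathscr{N}_i$ contains at least one reaction, hence at least one complex, hence lies inside at least one linkage class; by the Structure Theorem, that linkage class is in turn contained in $\mathscr{N}_i$, so some $\mathscr{L}_j$ maps to $\mathscr{N}_i$ under $\iota$. Surjectivity of a map from an $l$-element set to a $k$-element set forces $k \leq l$, which is the desired inequality.

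I do not anticipate any real obstacle: this is a bookkeeping consequence of the Structure Theorem. The only subtle point worth being explicit about is that one must use the convention that the pieces $\mathscr{N}_i$ of a decomposition are nonempty (so that each $\mathscr{N}_i$ actually absorbs at least one linkage class), which follows from the fact that $\{\mathscr{R}_i\}$ is a partition of $\mathscr{R}$ and not merely a covering.
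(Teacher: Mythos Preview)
Your proof is correct and follows essentially the same approach as the paper: both arguments use the Structure Theorem for $\mathscr{C}$-decompositions to conclude that each $\mathscr{N}_i$ is a (nonempty) union of linkage classes, and then deduce $k \leq l$ by counting. Your version is somewhat more explicit in phrasing this as a surjection $\{1,\dots,l\} \to \{1,\dots,k\}$, whereas the paper simply observes that if the decomposition coincides with the linkage class decomposition then $k=l$, and otherwise (being a strict coarsening) $k<l$.
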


\begin{proof}
If $\mathscr{N}$ is decomposed according to linkage classes, then $\mathscr{N}_i=\mathscr{L}_i$. Thus, $k=l$. If each $\mathscr{N}_i$ is the union of linkage classes, then the number of subnetworks is less than the number of linkage classes. Hence, $k<l$.
\end{proof}

\begin{corollary}
\label{cor:CInciIndep}
Any $\mathscr{C}$-decomposition is incident independent.
\end{corollary}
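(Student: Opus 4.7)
The plan is to prove incidence independence directly from the defining equality $n - l = \sum(n_i - l_i)$, using the Structure Theorem for $\mathscr{C}$-decompositions as the main tool. The key observation is that a $\mathscr{C}$-decomposition gives, by definition, a partition of the complex set, so the complex counts add: $n = \sum n_i$. It then suffices to show that the linkage class counts also add, i.e. $l = \sum l_i$.

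First I would invoke the Structure Theorem (Theorem \ref{thm:Structure thm for C-decomposition}), which asserts that each subnetwork $\mathscr{N}_i$ in a $\mathscr{C}$-decomposition is a disjoint union of whole linkage classes of $\mathscr{N}$, and each linkage class of $\mathscr{N}$ sits inside exactly one $\mathscr{N}_i$. From this I conclude that the linkage classes of $\mathscr{N}_i$ (as a standalone network) coincide with those linkage classes of $\mathscr{N}$ that lie in $\mathscr{N}_i$, because restricting to $\mathscr{R}_i$ neither breaks a linkage class of $\mathscr{N}$ (since the whole class is in $\mathscr{R}_i$) nor merges two of them (since the class structure is inherited from the ambient network). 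Consequently the multiset $\{\mathscr{L}_1,\dots,\mathscr{L}_l\}$ partitions across the $\mathscr{N}_i$, giving $l = \sum l_i$.

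Combining $n = \sum n_i$ and $l = \sum l_i$ yields $n - l = \sum (n_i - l_i)$, which is precisely equation \eqref{eqn:incidence independence}, so the decomposition is incidence independent.

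I do not expect any real obstacle here; the only point that requires a sentence of care is the claim that the linkage classes of the subnetwork $\mathscr{N}_i$ are exactly the linkage classes of $\mathscr{N}$ it contains, and this is an immediate consequence of the Structure Theorem together with the fact that complexes are not shared between the $\mathscr{N}_i$. An alternative, essentially equivalent, route would be to note that the primary example already establishes incidence independence for the linkage class decomposition itself, and then apply Proposition \ref{prop:incidence independent coarsening} since, by the Structure Theorem, every $\mathscr{C}$-decomposition is a coarsening of the linkage class decomposition.
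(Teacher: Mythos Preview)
Your proof is correct. Your primary route---verifying the equality $n-l=\sum(n_i-l_i)$ directly by showing $n=\sum n_i$ (from the partition of complexes) and $l=\sum l_i$ (from the Structure Theorem)---is a valid and self-contained argument. The paper, however, takes exactly the alternative route you sketch at the end: it observes that the linkage class decomposition is incidence independent and that, by the Structure Theorem, every $\mathscr{C}$-decomposition is a coarsening of it, then invokes the coarsening-invariance proposition (Proposition~\ref{prop:incidence independent coarsening}). Your direct argument has the advantage of not relying on that extra proposition and of making the counting completely transparent; the paper's version is shorter once the coarsening lemma is in hand and highlights the structural role of the linkage class decomposition as the finest $\mathscr{C}$-decomposition.
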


\begin{proof}
The linkage class decomposition is incidence independent, so any coarsening of it is also incidence independent after Proposition \ref{prop: independent coarsening}. 
\end{proof}

\begin{example}
In \cite{AJLM2018}, Arceo et al. introduced the subnetwork $\mathscr{N}_S$ of $S$-complexes of a CRN $\mathscr{N}$ and used this in characterizing the classification of CRNs based on the intersection of $R$ and $S$.
\end{example}

We recall the relevant definition from \cite{AJLM2018}:

\begin{definition}
An \textbf{$S$-complex} of a CRN is a complex which, as a vector in $\mathbb{R}^{\mathscr{S}}$, is contained in the stoichiometric subspace $S$. We denote the subset of $S$-complexes in $\mathscr{C}$ with $\mathscr{C}_S$.
\end{definition}  

The CRN classification based on the intersection of the reactant and stoichiometric subspaces is summarized in Figure \ref{fig:Network classes}.

\begin{figure}[h!]
\begin{center}
    \includegraphics[width=0.8\textwidth]{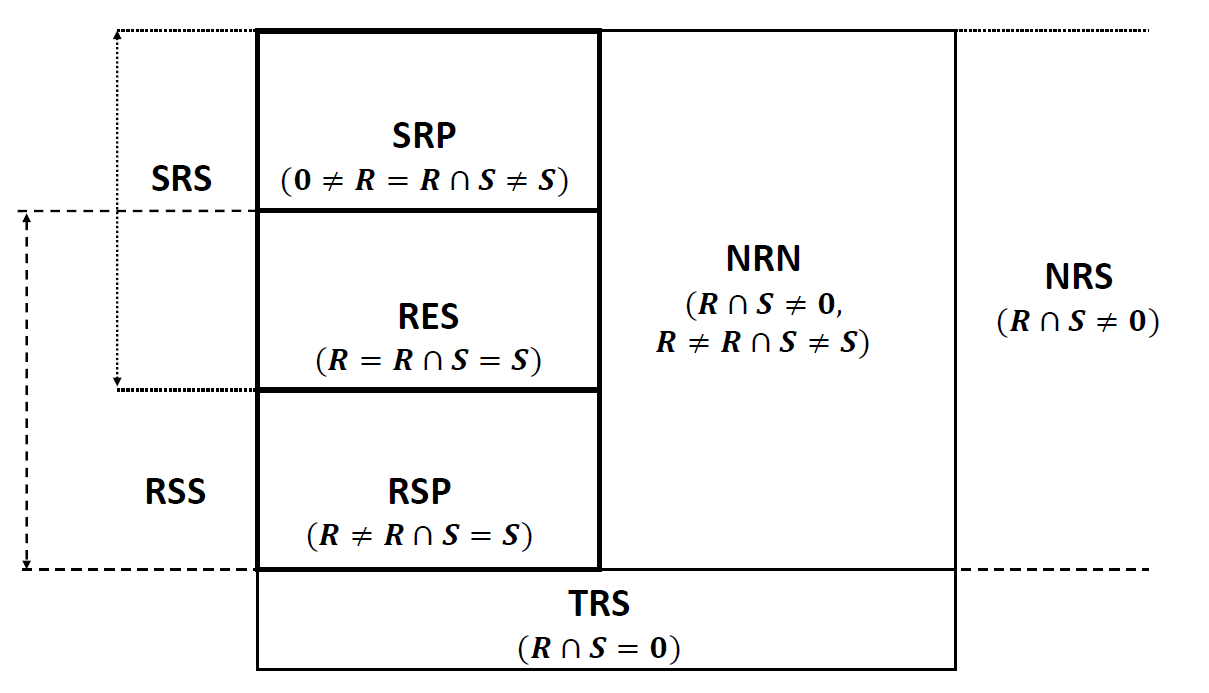}
\end{center}
\caption{An overview of the network classes.}
\label{fig:Network classes}
\end{figure}

The following Theorem provides the relationship between $\mathscr{N}_S$ and the network classes:

\begin{theorem}(Arceo et al. \cite{AJLM2018})
\label{thm:relationship between N_S and network classes}
Let $Y$ be the map of complexes of a network $\mathscr{N}$ with subnetwork $\mathscr{N}_S$ of $S$-complexes.
\begin{enumerate}[i)]
\item $\mathscr{N}$ is $SRS \Leftrightarrow \Ima Y = S \Leftrightarrow c=s.$ Furthermore, $\mathscr{N}$ is $SRS \Rightarrow \mathscr{N}=\mathscr{N}_S.$
\item $\mathscr{N}$ is $RSS \Leftrightarrow \Ima Y = R \Leftrightarrow c=q.$ Furthermore, $\mathscr{N}$ is $RSS \Rightarrow$ either $\mathscr{N}=\mathscr{N}_S(RES)$ or $\mathscr{N} \neq \mathscr{N}_S(RSP).$
\item $\mathscr{N}$ is $TRS \Leftrightarrow \Ima Y$ is a direct sum of  $R$ ans $S \Leftrightarrow c=q+s.$ Furthermore, $\mathscr{N}$ is $TRS \Rightarrow \mathscr{N} \neq \mathscr{N}_S$ and, if $\mathscr{N}$ has no inflow reaction, $\mathscr{N}_S = \phi.$
\item $\mathscr{N}$ is $NRN \Rightarrow c < q+s < 2c.$ Furthermore, $\mathscr{N}$ is $NRN \Rightarrow \mathscr{N} \neq \mathscr{N}_S.$
\end{enumerate}
\end{theorem}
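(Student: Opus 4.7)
The plan is to base everything on one clean identity in $\mathbb{R}^{\mathscr{S}}$, namely $\Ima Y = R + S$, and then read off each of the four cases from elementary dimension bookkeeping. To establish the identity, first I would observe that $R \subseteq \Ima Y$ because every reactant is itself a complex, and $S \subseteq \Ima Y$ because each generator $y'-y$ is a difference of two complexes in $\Ima Y$; for the reverse inclusion I would use the standing assumption that there are no isolated complexes, so every complex $y$ is either a reactant (hence in $R$) or appears as a product $y' = y + (y'-y)$ of some reaction $y \to y'$ with $y \in R$ and $y'-y \in S$, placing $y' \in R+S$. Combining these gives $\Ima Y = R + S$ and hence the dimension formula
\begin{equation*}
c \;=\; q + s - \dim(R \cap S).
\end{equation*}

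With this in hand, each of the three equivalences in (i)--(iii) becomes a one-line translation. For (i), $\Ima Y = S$ is equivalent to $R \subseteq S$ (which I will take as the definition of $SRS$), and the dimension version $c=s$ follows at once from $S \subseteq \Ima Y$ together with the dimension formula. For (ii), $\Ima Y = R$ is equivalent to $S \subseteq R$, hence to $c=q$. For (iii), $\Ima Y = R \oplus S$ is equivalent to $R \cap S = 0$, which via the dimension formula is exactly $c = q+s$. For (iv), I will deduce the chain $c < q+s < 2c$ from the fact that $NRN$ means $R \not\subseteq S$ and $S \not\subseteq R$ with $R \cap S \neq 0$: the left inequality comes from $\dim(R \cap S) > 0$, and the right inequality from $\dim(R \cap S) < \min(q,s) \leq (q+s)/2$.

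The second halves of (i)--(iv), concerning whether $\mathscr{N}$ coincides with $\mathscr{N}_S$, I would handle by a uniform argument about when a product complex lies in $S$: for any reaction $y \to y'$, $y' \in S$ if and only if $y \in S$, because $y'-y$ is always in $S$. In the $SRS$ case, $R \subseteq S$ forces every reactant into $S$ and therefore every product into $S$, so all complexes are $S$-complexes and $\mathscr{N} = \mathscr{N}_S$. In the $RSS$ case, $S \subseteq R$ gives no such constraint, which is why both $RES$ ($\mathscr{N} = \mathscr{N}_S$) and $RSP$ ($\mathscr{N}\neq \mathscr{N}_S$) are possible. In the $TRS$ case, the absence of inflow reactions ensures every reactant is a nonzero element of $R$, and $R \cap S = 0$ then forces no reactant into $S$; by the observation above, no product is in $S$ either, so $\mathscr{N}_S = \emptyset$. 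In the $NRN$ case, $R \not\subseteq S$ exhibits a reactant complex outside $S$, so $\mathscr{N} \neq \mathscr{N}_S$.

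The only subtle step I anticipate is the $TRS$ clause: keeping track of the role of the no-inflow hypothesis and the convention that the zero vector is not treated as a complex, so that $R \cap S = \{0\}$ genuinely precludes any complex from being an $S$-complex. Everything else reduces to the identity $\Ima Y = R+S$ and the inclusion-exclusion formula for $\dim(R+S)$.
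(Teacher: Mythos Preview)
The paper does not supply its own proof of this theorem: it is quoted from Arceo et al.\ \cite{AJLM2018} as background for the discussion of the $S$-decomposition, and no argument is given in the present paper. So there is nothing in-paper to compare your attempt against.

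That said, your approach is sound and economical. The identity $\Ima Y = R + S$ is the right organizing principle, and your derivation of it from the no-isolated-complexes assumption is correct: reactants lie in $R$, products can be written as a reactant plus a reaction vector, and $S\subseteq\Ima Y$ because $\Ima Y$ is a subspace. The dimension bookkeeping for (i)--(iii) then follows immediately, and your argument for the strict chain $c<q+s<2c$ in (iv) via $\dim(R\cap S)<\min(q,s)\le(q+s)/2$ is clean. The uniform treatment of the $\mathscr{N}_S$ clauses through the observation ``$y\in S\Leftrightarrow y'\in S$ for any reaction $y\to y'$'' (which is precisely the Lemma the paper states just after this theorem) is also correct.

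One small omission: in (iii) the theorem asserts two things, first the unconditional $\mathscr{N}\neq\mathscr{N}_S$, and second, under the no-inflow hypothesis, the stronger $\mathscr{N}_S=\emptyset$. You argue only the second. For the first you need one extra line: if $\mathscr{N}=\mathscr{N}_S$ then every reactant complex lies in $S$, so $R\subseteq S$ and hence $R=R\cap S=\{0\}$; this would force every reaction to be an inflow and put the network in the $SRS$ class, contradicting the disjointness of the classes in Figure~\ref{fig:Network classes}. With that sentence added, your proof is complete.
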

 
Arceo et al. \cite{AJLM2018} showed that $\mathscr{N}_S$ had the distinctive property of being a union of linkage classes. We now show that this derives from the fact that $\mathscr{N}_S$ is part of a $\mathscr{C}$-decomposition which we call the $S$-decomposition. We have the following Lemma:

\begin{lemma}
For any reaction $y \rightarrow y'$, either both complexes $y$ and  $y'$ are in $S$ or none of them.
\end{lemma}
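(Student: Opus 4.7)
The plan is to use the defining property of the stoichiometric subspace $S$: by construction, every reaction vector $y' - y$ belongs to $S$ whenever $y \to y'$ is a reaction of $\mathscr{N}$. Since $S$ is a linear subspace of $\mathbb{R}^{\mathscr{S}}$, it is closed under addition and subtraction, and this closure will force $y$ and $y'$ to belong to $S$ simultaneously or not at all.

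Concretely, I would first fix a reaction $y \to y'$ and record that $y' - y \in S$. Then I would split the argument into two implications. For the forward direction, suppose $y \in S$; then $y' = y + (y' - y)$ is a sum of two vectors in $S$, hence $y' \in S$. Symmetrically, if $y' \in S$, then $y = y' - (y' - y) \in S$. Combining these, $y \in S \Leftrightarrow y' \in S$, which is exactly the contrapositive of the claimed dichotomy: either both are in $S$, or neither is.

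I do not anticipate any real obstacle here; the statement is a direct consequence of the linearity of $S$ together with the fact that reaction vectors lie in $S$ by definition. The only subtlety worth flagging is that the complexes $y, y'$ are being viewed as vectors in $\mathbb{R}^{\mathscr{S}}$ via the identification $y = \sum_{s \in \mathscr{S}} y_s\, s$ introduced earlier in Section~\ref{sec:2}, so that the equation $y' = y + (y' - y)$ is genuinely an identity in $\mathbb{R}^{\mathscr{S}}$ rather than just a formal manipulation of complexes. Once this identification is made explicit, the proof is a two-line linear-algebra argument.
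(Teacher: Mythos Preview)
Your proposal is correct and is essentially identical to the paper's own proof: both observe that the reaction vector $y'-y$ lies in $S$ by definition and then use closure of the subspace under addition and subtraction to conclude $y\in S \Leftrightarrow y'\in S$. The paper's argument is just a terser (two-line) version of what you wrote.
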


\begin{proof} 
If $y$ is in $S$, then $y'= (y' \cdot y) + y$ is also in $S$. Similarly, $y'$ in $S$ implies $y = y' - (y'- y)$ is in $S.$
\end{proof}

\begin{definition}
 A reaction $y \rightarrow y'$ is an $S$-reaction if both complexes $y$ and  $y'$ are in $S$. $\textbf{R}_S$ denotes the set of $S$-reactions, the subnetwork-complement decomposition it induces is called the $S$-decomposition.
\end{definition}

The $S$-decomposition is clearly a $\mathscr{C}$-decomposition, and hence its subnetworks $\mathscr{N}_S$ and $\mathscr{N}_{NS}$ are unions of linkage classes. Theorem \ref{thm:relationship between N_S and network classes} provides a good example showing that in NRN network, $\mathscr{N}_S$ is non-empty.

We also obtain a new characterization of the ILC property:

\begin{corollary}
A network has independent linkage classes if and only if every $\mathscr{C}$-decomposition is independent.
\end{corollary}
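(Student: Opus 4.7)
The plan is to prove both directions by leveraging the Structure Theorem for $\mathscr{C}$-decompositions (Theorem \ref{thm:Structure thm for C-decomposition}) together with the coarsening result for independent decompositions (Proposition \ref{prop: independent coarsening}). Neither direction appears genuinely difficult; the main conceptual point is recognizing that the ILC property is exactly the independence of one distinguished $\mathscr{C}$-decomposition, and that the Structure Theorem forces every other $\mathscr{C}$-decomposition to sit above it.

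For the direction \enquote{every $\mathscr{C}$-decomposition is independent $\Rightarrow$ ILC}, I first observe that the linkage class decomposition $\mathscr{N} = \mathscr{L}_1 \cup \cdots \cup \mathscr{L}_l$ is itself a $\mathscr{C}$-decomposition, since distinct linkage classes share no complexes. Applying the hypothesis to this particular decomposition yields $s = s_1 + \cdots + s_l$, which is precisely the definition of the independent linkage classes property.

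For the converse, suppose $\mathscr{N}$ has independent linkage classes, and let $\mathscr{N} = \mathscr{N}_1 \cup \cdots \cup \mathscr{N}_k$ be an arbitrary $\mathscr{C}$-decomposition. By the Structure Theorem for $\mathscr{C}$-decompositions, each $\mathscr{N}_i$ is a union of linkage classes and each linkage class sits inside exactly one $\mathscr{N}_i$. Consequently, the given $\mathscr{C}$-decomposition is a coarsening of the linkage class decomposition. The ILC hypothesis states that the linkage class decomposition is independent, so Proposition \ref{prop: independent coarsening} immediately gives that its coarsening $\{\mathscr{N}_1,\ldots,\mathscr{N}_k\}$ is independent as well.

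The only step that might raise a subtle question is whether the Structure Theorem truly delivers a partition refinement in the sense required by Proposition \ref{prop: independent coarsening} — i.e.\ at the level of reaction sets (and hence stoichiometric subspaces), not just complexes. This is immediate: since each $\mathscr{R}_i$ is a disjoint union of the reaction sets $\mathscr{R}_{\mathscr{L}_j}$ of those linkage classes contained in $\mathscr{N}_i$, the stoichiometric subspace $S_i$ of $\mathscr{N}_i$ is the sum of the stoichiometric subspaces of these linkage classes, so the coarsening hypothesis of Proposition \ref{prop: independent coarsening} applies verbatim. Thus no genuine obstacle arises, and the entire argument reduces to invoking the Structure Theorem and the coarsening proposition in sequence.
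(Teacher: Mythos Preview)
Your proof is correct and follows essentially the same approach as the paper's: the backward direction uses that the linkage class decomposition is itself a $\mathscr{C}$-decomposition, and the forward direction groups the linkage-class summands according to the $\mathscr{C}$-decomposition to obtain a direct sum. The only cosmetic difference is that you explicitly invoke the Structure Theorem and Proposition~\ref{prop: independent coarsening}, whereas the paper carries out the grouping argument directly without naming those results.
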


\begin{proof}
If a network has independent linkage classes then the stoichiometric subspace $S$ is the direct sum of the stoichiometric subspaces of the linkage classes. Grouping the summands  according to the unions of the linkage classes for the subnetworks of a $\mathscr{C}$-decomposition provides $S$ as the direct sum of the subnetworks. Hence, every $\mathscr{C}$-decomposition of a network is also independent. For the converse, since every $\mathscr{C}$-decomposition of a network is independent and the linkage class decomposition is also a $\mathscr{C}$-decomposition, it follows that the linkage classes are independent.
\end{proof}

\noindent
If a network has dependent linkage class, it may fail to have an independent $\mathscr{C}$-decomposition, as the following example shows:
\begin{example}
Consider the CRN with reactions $X_1 \rightarrow 2X_1 +X_2$ and $X_2 \rightarrow 2X_2 + X_1$, it has $\delta=1$. The only non-trivial decomposition is the linkage class decomposition, where the deficiency of the two linkage classes is 0. Clearly, the linkage class decomposition is dependent. In particular, it has no independent $\mathscr{C}$-decomposition. 
\end{example}

\subsection{Incidence independent decompositions and complex balanced equilibria}

Feinberg established the following basic relation between an independent decomposition and the set of positive equilibria of a kinetics on the network:

\begin{theorem}(Feinberg Decomposition Theorem \cite{FEIN1987}) 
Let $P(\mathscr{R})=\{\mathscr{R}_1,\mathscr{R}_2,...,\mathscr{R}_k\}$ be a partition of a CRN $\mathscr{N}$ and let $K \in \mathscr{K}(\mathscr{N}).$ If $\mathscr{N}=\mathscr{N}_1+\mathscr{N}_2+...+\mathscr{N}_k$ is a network decomposition of $P(\mathscr{R})$ and $E_+(\mathscr{N}_i, K_i)=\{x \in \mathbb{R}_+^m|N_iK_i(x)=0\}$ then 
$$E_+(\mathscr{N}_1,K_1) \cap E_+(\mathscr{N}_2, K_2) \cap ... \cap E_+(\mathscr{N}_k, K_k) \subseteq E_+(\mathscr{N}, K).$$ 
If the network decomposition is independent, then equality holds.
\end{theorem}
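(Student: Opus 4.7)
The plan is to split the argument into the two inclusions, only the second of which requires independence. For the forward inclusion, which holds for every decomposition, I would exploit the fact that the species formation rate function splits additively along the partition of the reaction set. Since $\{\mathscr{R}_1,\ldots,\mathscr{R}_k\}$ partitions $\mathscr{R}$, the SFRF can be rewritten as
\[
f(x) = NK(x) = \sum_{y \to y' \in \mathscr{R}} K_{y \to y'}(x)(y' - y) = \sum_{i=1}^{k} \sum_{y \to y' \in \mathscr{R}_i} K_{y \to y'}(x)(y' - y) = \sum_{i=1}^{k} N_i K_i(x),
\]
where $K_i$ is the restriction of $K$ to $\mathscr{R}_i$ and $N_i K_i$ is, by construction, the SFRF of the subnetwork $\mathscr{N}_i$. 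Hence if $x \in \bigcap_i E_+(\mathscr{N}_i, K_i)$, every summand vanishes and $f(x) = 0$, placing $x$ in $E_+(\mathscr{N}, K)$.

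For the reverse inclusion under the independence hypothesis, the crucial observation is that each vector $N_i K_i(x)$ belongs to the subnetwork stoichiometric subspace $S_i$, since it is a nonnegative-coefficient linear combination of reaction vectors drawn from $\mathscr{R}_i$. Given $x \in E_+(\mathscr{N}, K)$, the displayed identity becomes $0 = \sum_{i=1}^k N_i K_i(x)$, an expression of $0$ as a sum of elements from $S_1,\ldots, S_k$. Independence means $S = S_1 \oplus \cdots \oplus S_k$, and uniqueness of the zero representation in a direct sum then forces $N_i K_i(x) = 0$ for every $i$. Since $x$ is positive by hypothesis, this gives $x \in E_+(\mathscr{N}_i, K_i)$ for each $i$.

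The only technical point to verify is the bookkeeping identification $N_i K_i = $ SFRF of $(\mathscr{N}_i, K_i)$, which follows immediately from the partition property (no reaction is counted in two subnetworks). I do not anticipate a genuine obstacle: the whole argument is linear-algebraic, the positivity of $x$ plays no active role beyond specifying the ambient set, and the only nontrivial ingredient is the defining property of a direct sum. Note that for the forward inclusion even the partition assumption is not strictly needed; a covering would also suffice, because the additive splitting $f = \sum_i N_i K_i$ continues to hold (possibly with some reactions appearing in several summands, which still gives $f(x)=0$ when each summand vanishes). The independence hypothesis is used exclusively to upgrade \emph{some} zero representation into the \emph{unique} zero representation required for the reverse inclusion.
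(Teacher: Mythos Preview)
Your argument for both inclusions is correct and is the standard proof of this result. Note, however, that the paper does not actually supply its own proof of this theorem: it is quoted as Feinberg's result from \cite{FEIN1987} and serves as motivation for the analogous statement about complex balanced equilibria (Theorem~\ref{theorem:decomposition and complex balanced equilibria}). That said, your method is exactly the one the paper employs for the analogue: there the identity $I_aK(x)=\sum_i I_{a,i}K_i(x)$ replaces your $NK(x)=\sum_i N_iK_i(x)$, and the direct-sum hypothesis on $\operatorname{Im} I_a$ replaces the direct-sum hypothesis on $S$, with the same uniqueness-of-zero argument forcing each summand to vanish. So the approaches coincide.

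One small correction to your closing remark: for a covering that is not a partition, the identity $f=\sum_i N_iK_i$ generally \emph{fails}, because reactions lying in several $\mathscr{R}_i$ are counted with multiplicity on the right-hand side. Consequently the forward inclusion $\bigcap_i E_+(\mathscr{N}_i,K_i)\subseteq E_+(\mathscr{N},K)$ is not automatic for arbitrary coverings from the argument you sketch; the partition hypothesis is genuinely used to obtain the additive splitting. This does not affect your main proof, only the parenthetical generalisation.
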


Our main result in this section is the analogue of Feinberg's 1987 result for incidence independent decompositions and complex balanced equilibria:

\begin{theorem}
\label{theorem:decomposition and complex balanced equilibria}
Let $\mathscr{N}=(\mathscr{S}, \mathscr{C}, \mathscr{R})$ be a CRN and $\mathscr{N}_i=(\mathscr{S}_i, \mathscr{C}_i, \mathscr{R}_i), i=1,...,k$ be the subnetworks of a decomposition. Let $K$ be any kinetics and $Z_+(\mathscr{N},K), Z_+(\mathscr{N}_i,K_i)$ and $E_+(\mathscr{N}_i, K_i)$ be as defined above. Then:
\begin{enumerate}[i)]
\item $\bigcap Z_+(\mathscr{N}_i, K_i) \subset Z_+(\mathscr{N},K)$ \\
If the decomposition is incidence independent, then
\item $Z_+(\mathscr{N}, K)=\bigcap Z_+(\mathscr{N}_i, K_i)$ 
\item $Z_+(\mathscr{N}, K)\neq \emptyset$ then $Z_+(\mathscr{N}_i, K_i)\neq \emptyset$ for each $i$.
\end{enumerate}
\end{theorem}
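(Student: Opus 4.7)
The plan is to reduce everything to the following key identity: for a decomposition $\mathscr{R} = \mathscr{R}_1 \sqcup \cdots \sqcup \mathscr{R}_k$, after identifying each $\mathbb{R}^{\mathscr{C}_i}$ with its natural image in $\mathbb{R}^{\mathscr{C}}$, one has
\begin{equation*}
I_a K(c) = \sum_{i=1}^{k} I_{a,i} K_i(c),
\end{equation*}
because each reaction $a$ belongs to exactly one $\mathscr{R}_i$ and contributes the same vector $K(a)\bigl(\omega_{\pi(a)} - \omega_{\rho(a)}\bigr)$ on both sides. Each summand $I_{a,i} K_i(c)$ lies in $\Ima I_{a,i}$, and incidence independence is precisely the statement that $\Ima I_a$ is the \emph{direct} sum of these images.

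For part (i), which I expect to hold for \emph{any} decomposition, I take $c \in \bigcap_i Z_+(\mathscr{N}_i, K_i)$. Positivity of $c$ is immediate, and every $I_{a,i} K_i(c)$ vanishes by assumption, so the key identity gives $I_a K(c) = 0$, i.e.\ complex balance for $\mathscr{N}$. Since the SFRF factors as $f(x) = Y \cdot I_a K(x)$, this also yields $f(c) = 0$, so $c \in Z_+(\mathscr{N}, K)$.

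For part (ii) I assume incidence independence and prove the reverse inclusion. Let $c \in Z_+(\mathscr{N}, K)$. Then $I_a K(c) = 0$ combined with the key identity gives $\sum_i I_{a,i} K_i(c) = 0$ with each summand in $\Ima I_{a,i}$; the direct sum property then forces every $I_{a,i} K_i(c) = 0$, so $c$ is complex balanced for each subnetwork. Positivity of $c$ on $\mathscr{S}$ restricts to positivity on $\mathscr{S}_i \subseteq \mathscr{S}$, giving $c \in Z_+(\mathscr{N}_i, K_i)$ for each $i$. Part (iii) is then an immediate corollary: any element of $Z_+(\mathscr{N}, K)$ already lies in every $Z_+(\mathscr{N}_i, K_i)$ by (ii).

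The main obstacle is essentially bookkeeping: one must consistently use the canonical embedding $\mathbb{R}^{\mathscr{C}_i} \hookrightarrow \mathbb{R}^{\mathscr{C}}$ so that the direct-sum extraction in part (ii) is rigorous, and verify that the positivity condition in the definition of $Z_+$ transfers cleanly between $\mathbb{R}^{\mathscr{S}}_>$ and the smaller $\mathbb{R}^{\mathscr{S}_i}_>$. No analytic or combinatorial difficulty arises once the key identity and the direct-sum characterization of incidence independence are in place; the Feinberg-style analogue thus reduces to linear algebra on the incidence map rather than on the stoichiometric matrix.
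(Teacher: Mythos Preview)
Your proposal is correct and follows essentially the same approach as the paper's proof: both establish the identity $I_a K(c) = \sum_i I_{a,i} K_i(c)$, use it directly for part (i), invoke the direct-sum characterization of incidence independence to extract the vanishing of each summand for part (ii), and note that (iii) is immediate from (ii). Your treatment is, if anything, slightly more explicit about the embedding $\mathbb{R}^{\mathscr{C}_i}\hookrightarrow\mathbb{R}^{\mathscr{C}}$ and the transfer of positivity to $\mathbb{R}^{\mathscr{S}_i}_>$, but the underlying argument is identical.
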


\begin{proof}
$\mathscr{N}, \mathscr{N}_i$ are not assumed weakly reversible, so $Z_+(\mathscr{N}),\quad Z_+(\mathscr{N}_i)$ may be empty. Let $\mathscr{R}_i$ be the reaction subset defining $\mathscr{N},$ and $K_i:R^{\mathscr{S}} \rightarrow R^{\mathscr{R}_i}$ is given by $pr_i \circ K,$ where $pr_i$ is the projection from $R^\mathscr{R}$ to $R^{\mathscr{R}_i}.$ Furthermore set $I_{a,i}:=res_{\mathscr{R}_i}I_a.$ Then $I_a K(x)=\sum_i I_{a,i}K_i(x).$ Clearly, in i) if the LHS is empty, there is nothing to prove. If $x \in \bigcap Z_+(\mathscr{N}_i,K_i)$, then $I_{a,i}K_i(x)=0$ for each $I$, and hence their sum $I_aK(x)=0$, or $x \in Z_+(\mathscr{N})$.

To show ii), again if the LHS is empty, then we are done. If $x \in Z_+(\mathscr{N})$, then $I_aK(x)=0=\sum I_{a,i}K_i(x)=0=\sum 0.$ Since the decomposition is incidence independent, it follows that $I_{a,i}K(x)=0$ for each $I$, or $Z_+(\mathscr{N},K) \subset \bigcap Z_+(\mathscr{N}_i,K_i).$ Note that in this case, $\mathscr{N}$ and $\mathscr{N}_i$ are necessarily weakly reversible. Equality then follows from i). iii) follows directly from ii).  
\end{proof}

The converse statement of Theorem \ref{theorem:decomposition and complex balanced equilibria} (iii) holds for a subset of incidence independent decompositions with any kinetics. This is a significant contrast to the case of independent decompositions where the converse statement is known only for a few restricted kinetics such as MAK and PL-TIK, which are power law kinetics with zero kinetic reactant deficiency \cite{TMJ2019}. To show this part of our second main result, we need the following proposition:

\begin{proposition} (Boros \cite{BOROS2013})
\label{prop:Image A}
Let $l, m \in \mathbb{Z}_+$ and $n_1,n_2,...,n_l \in \mathbb{Z}_+.$ Let $A_j \in \mathbb{R}^{n_j} (j \in \{1,2,...,l\}).$ Assume that $\{x \in \mathbb{R}^m|A_j \cdot x = b_j\} \neq \emptyset$ for all $j \in \{1, 2, ..., l\}$ and $$\Ima [A_1^\top ,  A_2^\top , ... , A_l^\top]=\Ima A_1^\top \oplus \Ima A_2^\top \oplus ... \oplus \Ima A_l^\top.$$ Then
$$\bigcap \{x \in \mathbb{R}^m| A_j \cdot x=b_j\} \neq \emptyset.$$ 
\end{proposition}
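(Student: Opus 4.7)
The plan is to reformulate the intersection question as a single linear system and then apply the Fredholm alternative. Concretely, I would stack the matrices vertically as $A=(A_1^\top,A_2^\top,\ldots,A_l^\top)^\top$ of size $(n_1+\cdots+n_l)\times m$, and correspondingly stack the right-hand sides as $b=(b_1,b_2,\ldots,b_l)$. The desired conclusion $\bigcap_j\{x\in\mathbb{R}^m:A_jx=b_j\}\neq\emptyset$ is then equivalent to the solvability of the single system $Ax=b$, i.e.\ to the membership statement $b\in\Ima A$.

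By the Fredholm alternative, $\Ima A=(\ker A^\top)^\perp$, so it suffices to show that $\langle b,y\rangle=0$ for every $y=(y_1,\ldots,y_l)\in\ker A^\top$. Writing $A^\top$ as the block row $[A_1^\top,\ldots,A_l^\top]$, the condition $A^\top y=0$ translates to $A_1^\top y_1+A_2^\top y_2+\cdots+A_l^\top y_l=0$ as vectors in $\mathbb{R}^m$. This is precisely where the hypothesis enters: since each summand $A_j^\top y_j$ lies in $\Ima A_j^\top$ and the sum $\sum_j \Ima A_j^\top$ is direct, the summands must vanish individually. Hence $A_j^\top y_j=0$, i.e.\ $y_j\in\ker A_j^\top$ for every $j$.

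The individual non-emptiness assumption now closes the argument. For each $j$, pick any particular solution $x_j$ with $A_jx_j=b_j$; then
\begin{equation*}
y_j^\top b_j \;=\; y_j^\top A_j x_j \;=\; (A_j^\top y_j)^\top x_j \;=\; 0.
\end{equation*}
Summing over $j$ yields $\langle b,y\rangle=\sum_j y_j^\top b_j=0$, so $b\in(\ker A^\top)^\perp=\Ima A$, which is the required conclusion.

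The main obstacle is essentially bookkeeping rather than depth: one has to correctly identify $\ker A^\top$ with the tuples $(y_1,\ldots,y_l)$ satisfying $\sum_j A_j^\top y_j=0$, and then use the direct-sum hypothesis to decouple this single vector equation into the $l$ separate conditions $A_j^\top y_j=0$. Once this decoupling is available, each block's individual solvability delivers the needed orthogonality termwise, and the Fredholm reformulation packages everything cleanly.
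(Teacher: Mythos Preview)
The paper does not supply a proof of this proposition; it is quoted from Boros's thesis \cite{BOROS2013} and used as a black box in the proof of Theorem~\ref{theorem:complex balanced equilibria}. Your argument via the Fredholm alternative is correct and is the standard way to establish the result: stacking the blocks reduces the question to $b\in\Ima A$, the direct-sum hypothesis decouples $\ker A^\top$ into the product of the individual $\ker A_j^\top$, and the blockwise solvability then gives $b_j\perp\ker A_j^\top$ for each $j$. There is nothing further to compare.
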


\begin{theorem}
\label{theorem:complex balanced equilibria}
Let $\mathscr{N}=\mathscr{N}_1 \cup \mathscr{N}_2 \cup ... \cup \mathscr{N}_k$ be a weakly reversible $\mathscr{C}$-decomposition of a chemical kinetic system $(\mathscr{N},K).$ Then, if $Z_+(\mathscr{N}_i,K) \neq \emptyset$ for each subnetwork, $Z_+(\mathscr{N}, K) \neq \emptyset.$
\end{theorem}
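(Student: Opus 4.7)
The proof proceeds by combining Corollary \ref{cor:CInciIndep}, Theorem \ref{theorem:decomposition and complex balanced equilibria}(ii), and Boros's Proposition \ref{prop:Image A}. First I would invoke Corollary \ref{cor:CInciIndep} to conclude that the given $\mathscr{C}$-decomposition is incidence independent, so that Theorem \ref{theorem:decomposition and complex balanced equilibria}(ii) yields the identity
\[
Z_+(\mathscr{N},K) \;=\; \bigcap_{i=1}^{k} Z_+(\mathscr{N}_i, K_i).
\]
The target statement $Z_+(\mathscr{N},K)\neq\emptyset$ is thus equivalent to showing that the intersection on the right is nonempty, and this is exactly the kind of assertion Proposition \ref{prop:Image A} is designed to deliver.

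The next step is to fit the subnetwork CB conditions into the affine format demanded by Boros. For each $i$, I would rewrite ``$x\in Z_+(\mathscr{N}_i,K)$'' as the solution set of an affine system $A_i\cdot z = b_i$, taking $A_i$ to be (a matrix presentation of) the subnetwork incidence map $I_{a,i}$ and $b_i$ to be determined by the witness point $x^{\ast}_i\in Z_+(\mathscr{N}_i,K)$ supplied by the hypothesis. The direct-sum condition
\[
\Ima[A_1^\top,\ldots,A_k^\top] \;=\; \Ima A_1^\top \oplus \cdots \oplus \Ima A_k^\top
\]
required by Proposition \ref{prop:Image A} should then come for free: because the decomposition is a $\mathscr{C}$-decomposition, the reactions partition as $\mathscr{R}=\sqcup_i \mathscr{R}_i$, so each $\Ima I_{a,i}^\top \subset \mathbb{R}^{\mathscr{R}_i}\subset \mathbb{R}^{\mathscr{R}}$ sits in a separate coordinate block and the sum is automatically direct. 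Invoking Boros supplies a common solution, which by construction lies in $\bigcap_i Z_+(\mathscr{N}_i,K)=Z_+(\mathscr{N},K)$.

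The main obstacle I anticipate is the linearization step. The condition $I_{a,i}K(x)=0$ is in general nonlinear in $x$, while Proposition \ref{prop:Image A} is strictly affine-linear. The natural workaround is to change variables to the rate vector $y=K(x)$ and to present CB-membership for subnetwork $i$ as the affine slice $y|_{\mathscr{R}_i}\in \ker I_{a,i}$, anchored at the witness rate vector $y^{\ast}_i=K(x^{\ast}_i)$; weak reversibility of each $\mathscr{N}_i$ is what makes $\ker I_{a,i}$ meet the positive orthant and keeps these slices genuinely nonempty, while the $\mathscr{C}$-decomposition block structure is what delivers the direct-sum hypothesis. A final subtlety I would have to check is that the common solution Boros produces lifts back to a strictly positive composition $x^{\ast}\in\mathbb{R}^{\mathscr{S}}_{>}$ rather than merely to an abstract rate vector; this is where the hypothesis ``$(\mathscr{N},K)$ is a chemical kinetic system with each $Z_+(\mathscr{N}_i,K)\neq\emptyset$'' has to be used in full strength, since it guarantees that each slice is anchored at a point of the form $K(x^{\ast}_i)$ with $x^{\ast}_i>0$.
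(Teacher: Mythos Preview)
Your overall strategy---incidence independence via Corollary~\ref{cor:CInciIndep}, the identity $Z_+(\mathscr{N},K)=\bigcap_i Z_+(\mathscr{N}_i,K_i)$ from Theorem~\ref{theorem:decomposition and complex balanced equilibria}(ii), then Proposition~\ref{prop:Image A}---is the paper's line as well. The one structural difference is that the paper first treats the linkage class decomposition, where the block-diagonal form of $I_a$ is most transparent, and only afterwards passes to an arbitrary $\mathscr{C}$-decomposition via the Structure Theorem~\ref{thm:Structure thm for C-decomposition}: since every $\mathscr{C}$-decomposition is a coarsening of the linkage class decomposition, the intersection over the $\mathscr{N}_i$ is just a regrouping of the intersection over linkage classes.

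Where your outline diverges and leaves a genuine gap is in the setup for Boros. Your direct-sum argument---$\Ima I_{a,i}^\top\subset\mathbb{R}^{\mathscr{R}_i}$ with the $\mathscr{R}_i$ disjoint---uses only that the reaction sets partition $\mathscr{R}$, which holds for \emph{every} decomposition; the $\mathscr{C}$-hypothesis has not entered. Worse, after your change of variables to $y=K(x)$ the subnetwork CB conditions become homogeneous linear in $y$, so their intersection trivially contains $y=0$ (indeed one can already glue the witness rate vectors $K_i(x_i^\ast)$ along the disjoint blocks $\mathscr{R}_i$ without invoking Boros at all). All of the content has thus been pushed into the ``lifting back to a positive $x$'' step that you flag as a subtlety but do not carry out---and that step is the theorem. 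The paper does not pass to rate vectors; it uses the $\mathscr{C}$-property directly, observing that with complexes ordered as $\mathscr{C}_1,\ldots,\mathscr{C}_k$ and reactions as $\mathscr{R}_1,\ldots,\mathscr{R}_k$ the incidence matrix is block diagonal on both the complex side and the reaction side, and it is this complex-side block structure---available only for $\mathscr{C}$-decompositions---that is fed into Proposition~\ref{prop:Image A}.
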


\begin{proof}
We first consider the case when the $\mathscr{C}$-decomposition is the linkage class decomposition. It is well known that the incidence map $I_a$ has a block matrix decomposition, after the complex rows are arranged as $\mathscr{C}_1,...,\mathscr{C}_l$ and the reaction columns as $\mathscr{R}_1,...\mathscr{R}_l$ respectively:
\begin{equation*}
I_a = 
\begin{bmatrix}
I_{a,1} & \quad & 0 \\
\quad & \ddots & \quad  \\
0 & \quad & I_{a,l} 
\end{bmatrix}
\end{equation*}
Furthermore, a complex balanced equilibrium of $(\mathscr{N},K)$ and $(\mathscr{N}_i,K)$ is the image of $K(x)$ contained in $ker I_a$ and $ker I_{a,i},$ respectively. Since the $\mathscr{C}$-decomposition is incidence independent, $\Ima I_a$ is the direct sum of the images of the incidence maps of the subnetworks. In view of the block matrix description of $I_a$, we also obtain that $\Ima I_a^\top$ is the direct sum of the images of the transposed maps on the subnetworks. Hence, Proposition \ref{prop:Image A} is applicable and we have:
\begin{equation}
\label{eqn:Z+}
Z_+(\mathscr{N},K)=\bigcap Z_+(\mathscr{N}_i,K)\neq \emptyset.
\end{equation}

In Theorem \ref{thm:Structure thm for C-decomposition}, it is shown that any $\mathscr{C}$-decomposition is generated by a coarsening of the partition of the reaction set into the reaction sets of linkage classes. This implies that the incidence map of the subnetworks are groupings of the terms in Equation \ref{eqn:Z+} and hence the intersection is taken over the same sets of equilibria, which proves the claim.
\end{proof}

\begin{remark}
The two previous Theorems were derived for the special case of the linkage class decomposition and poly-PL kinetics in \cite{TMMNJ2020}.
\end{remark}

The following tables emphasize the analogous characteristics of independent and incidence independent decompositions:

\begin{table}[h!]
\begin{center}   
\begin{tabular}{ll}
\hline\noalign{\smallskip}
Independent decomposition: characteristic & Reference/Comment \\
\noalign{\smallskip}\hline\noalign{\smallskip}
Definition: $S$ is direct sum of subnetwork $S_i$			& Feinberg 1987 \cite{FEIN1987}  \\
Deficiency relationship: $\delta \leq \delta_1 + ... + \delta_k$				& Fortun et al. 2018 \cite{FLRM2018} \\
Equilibria sets for any kinetics: $E_+(\mathscr{N},K)=\cap E_+(\mathscr{N}_i,K_i)$				& Feinberg 1987 \cite{FEIN1987} \\
Coarsening invariance: any coarsening of an 			& this paper \\
independent decomposition is also independent &   \\
\noalign{\smallskip}\hline
\end{tabular}
\end{center}
\end{table}

\begin{table}[h!]
\begin{center}   
\begin{tabular}{ll}
\hline\noalign{\smallskip}
Incidence Independent decomposition: characteristic & Reference/Comment \\
\noalign{\smallskip}\hline\noalign{\smallskip}
Definition: $\Ima I_a$ is direct sum of subnetwork $I_{a,i}$			& this paper  \\
Deficiency relationship: $\delta \geq \delta_1 + ... + \delta_k$				& this paper /linkage class \\
    &  case: Feinberg 1987 \cite{FEIN1987}\\
CB Equilibria sets for weakly reversible decomposition and 				& this paper /linkage class  \\
any kinetics: $Z_+(\mathscr{N},K)=\cap Z_+(\mathscr{N}_i,K_i)$ & and PY-RDK case: \\
  & Talabis et al. 2018 \cite{TMMNJ2020}\\
Coarsening invariance: any coarsening of an incidence 			& this paper\\ 
independent decomposition is also incident independent &  \\
\noalign{\smallskip}\hline
\end{tabular}
\end{center}
\end{table}

\subsection{$\mathscr{C}^*$-decompositions}

In their S-system model of \textit{Mtb} gene regulation, Magombedze and Mulder \cite{MAMU2013} introduced three subsystems, which in the CRN representation generated a decomposition into three subnetworks, whose sets of non-zero complexes were pairwise disjoint. This led us to define the set of $\mathscr{C}^*$-decompositions of chemical reaction networks and study their properties.

\begin{definition}
A decomposition $\mathscr{N}=\mathscr{N}_1 \cup \mathscr{N}_2\cup...\cup\mathscr{N}_k$ with $\mathscr{N}_i=(\mathscr{S}_i,\mathscr{C}_i,\mathscr{R}_i)$ is a \textbf{$\mathscr{C}^*$-decomposition} if $\mathscr{C}_i^* \cap \mathscr{C}_j^* =\emptyset $ for $i \neq j$ where $\mathscr{C}_i^*$ and $\mathscr{C}_j^*$ are the non-zero complexes in $\mathscr{C}_i$ and $\mathscr{C}_j$, respectively. 
\end{definition}

Clearly, the set of $\mathscr{C}$-decompositions is contained in this set.

\begin{remark}

Gross et al. \cite{GHMS2019} call the union of two networks \enquote{complex-disjoint} if the intersection of their sets of complexes is contained in $\{0\}.$ If the covering defined by the union is a decomposition, then this construct is identical with a $\mathscr{C}^*$-decomposition.  However, we find their terminology somewhat confusing since the zero complex is a bona fide complex. In our view, the \enquote{complex-disjoint} decompositions are the $\mathscr{C}$-decompositions.
\end{remark}

The following Theorem describes the general structure of $\mathscr{C}^*$-decompositions.

\begin{theorem}(Structure Theorem for  $\mathscr{C}^*$-decomposition)
\label{thm:STC*}
Let $\mathscr{N}_1 \cup \mathscr{N}_2 \cup ... \cup \mathscr{N}_k$ be a $\mathscr{C}^*$-decomposition and $\mathscr{L}_0$ and $\mathscr{L}_{0,i}$ be the linkage classes of $\mathscr{N}$ and $\mathscr{N}_i$ containing the zero complex (note $\mathscr{L}_{0,i}$, is empty if $\mathscr{N}_i$ does not contain the zero complex). Then
\begin{enumerate}[i)]
\item the $\mathscr{L}_{0,i}$ form a $\mathscr{C}^*$-decomposition of $\mathscr{L}_0$
\item the (non-empty) $\mathscr{N}_i \setminus \mathscr{L}_{0,i}$ form a $\mathscr{C}$-decomposition of $\mathscr{N} \setminus \mathscr{L}_0$ 
\end{enumerate} 
\end{theorem}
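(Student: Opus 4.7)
The plan is to exploit the defining property of a $\mathscr{C}^*$-decomposition: the zero complex is the only one that may belong to more than one $\mathscr{C}_i$. Consequently, any undirected path of reactions in $\mathscr{N}$ that passes from one subnetwork to another must do so by transiting through $0$. I would record this as a short preliminary observation: two reactions lying in different subnetworks can share only the zero complex.

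For part (i), I would first note that each $\mathscr{L}_{0,i}$ is either empty or a linkage-class subnetwork of $\mathscr{N}_i$ containing $0$, so its non-zero complexes lie in $\mathscr{C}_i^*$; pairwise disjointness across $i$ is then immediate from the hypothesis on the ambient decomposition. The main work is to show that the $\mathscr{L}_{0,i}$ partition precisely the reactions of $\mathscr{L}_0$. Each reaction of $\mathscr{L}_0$ sits in a unique $\mathscr{N}_i$; I would argue that its two complexes are linked to $0$ within $\mathscr{N}_i$, so that the reaction in fact lies in $\mathscr{L}_{0,i}$. Concretely, given a non-zero complex $y$ in $\mathscr{C}_i \cap \mathscr{L}_0$, fix any undirected path in $\mathscr{N}$ from $0$ to $y$ and take its suffix beginning at the final visit to $0$; by the preliminary observation this suffix cannot change subnetworks (none of its intermediate complexes is $0$), and since it ends at $y \in \mathscr{C}_i$ it must lie entirely in $\mathscr{R}_i$, proving $y \in \mathscr{L}_{0,i}$. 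The reverse inclusion $\mathscr{L}_{0,i} \subseteq \mathscr{L}_0$ is immediate, since every reaction of $\mathscr{L}_{0,i}$ already connects its complexes to $0$ inside $\mathscr{N}$.

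For part (ii), once (i) is established, the remaining pieces $\mathscr{N}_i \setminus \mathscr{L}_{0,i}$ contain only non-zero complexes (whether $\mathscr{L}_{0,i}$ is empty or not), so their complex sets are pairwise disjoint by the $\mathscr{C}^*$-hypothesis, which yields the $\mathscr{C}$-decomposition property. The equality $\mathscr{N} \setminus \mathscr{L}_0 = \bigcup_i (\mathscr{N}_i \setminus \mathscr{L}_{0,i})$ then follows by double inclusion on reactions: a reaction outside $\mathscr{L}_0$ is in some $\mathscr{N}_i$ and, by part (i), cannot be in $\mathscr{L}_{0,i}$; conversely, any reaction in $\mathscr{N}_i \setminus \mathscr{L}_{0,i}$ links non-zero complexes which, again by (i) applied in reverse, do not lie in $\mathscr{L}_0$.

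The main obstacle I expect is the careful bookkeeping in part (i). A generic path in $\mathscr{N}$ from $0$ to $y$ can oscillate in and out of $0$ across several subnetworks, so one cannot directly claim that the whole path sits in one $\mathscr{N}_i$; the device of restricting to the suffix past the last visit to $0$ must be stated cleanly, and one must verify that this suffix is non-trivial (which uses $y \neq 0$) and that all its reactions lie in $\mathscr{R}_i$. Once this topological fact is secured, everything else reduces to routine inclusion-chasing driven by the preliminary observation.
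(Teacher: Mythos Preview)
Your argument is correct and follows the same overall strategy as the paper: use the $\mathscr{C}^*$-disjointness of non-zero complexes to show that $\mathscr{L}_0$ splits into the $\mathscr{L}_{0,i}$ and that the remainders form a $\mathscr{C}$-decomposition. The one substantive difference is that you make the connectivity step explicit with the ``suffix after the last visit to $0$'' device, establishing that any non-zero $y \in \mathscr{C}_i \cap \mathscr{L}_0$ is actually connected to $0$ \emph{within} $\mathscr{N}_i$ (hence lies in $\mathscr{L}_{0,i}$); the paper's proof asserts the partition $\mathscr{L}_0 = \bigcup_i \mathscr{L}_{0,i}$ after observing only that each non-zero complex of $\mathscr{L}_0$ lies in a unique $\mathscr{N}_i$, leaving this intra-subnetwork connectivity implicit. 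Your version therefore supplies the missing justification rather than taking a different route.
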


\begin{proof}
To prove (i), we need to show that each non-zero complex of $\mathscr{L}_0$ is contained in only one subnetwork $\mathscr{N}_i$. If there is only one subnetwork $\mathscr{N}_i$ containing the zero complex then we are done. If there are at least two subnetworks containing the zero complex then $\mathscr{L}_0$ has at least two non-zero complexes connected to the zero complex. Otherwise, if there would only be one complex then $\mathscr{N}_i$ is not a $\mathscr{C}^*$-decomposition of $\mathscr{N}$, a contradiction. Now, if one of these non-zero complexes belongs to different subnetworks, this would contradict that $\mathscr{N}$ partitions the non-zero complexes. Hence, all the non-zero complexes of $\mathscr{L}_0$ is contained in only one $\mathscr{N}_i$ and $\mathscr{L}_{0,1} \cup \mathscr{L}_{0,2} \cup...\cup \mathscr{L}_{0,j} = \mathscr{L}_0$ for $j \leq k$.

To prove (ii), it suffices to show that the intersection of the set of complexes in $\mathscr{N}_i \setminus \mathscr{L}_0$ is empty. The set of complexes in $\mathscr{N} \setminus \mathscr{L}_0$ are all non-zero and $\mathscr{N} \setminus \mathscr{L}_0 = (\mathscr{N}_1 \cup ...\cup \mathscr{N}_k) \setminus \mathscr{L}_0$. From (i), we have $\mathscr{L}_{0,1} \cup \mathscr{L}_{0,2} \cup...\cup \mathscr{L}_{0,j} = \mathscr{L}_0$ for $j \leq k$. Thus, $\mathscr{N} \setminus \mathscr{L}_0 = \mathscr{N}_1 \setminus \mathscr{L}_{0,1} \cup ... \cup \mathscr{N}_k \setminus \mathscr{L}_{0,k}$ where $\mathscr{L}_{0,k}$ is empty if $\mathscr{N}_k$ does not contain the zero complex. Since $\mathscr{N}_i$ is a $\mathscr{C}^*$-decomposition of $\mathscr{N}$, the intersection of the set of complexes in $\mathscr{N}_i \setminus \mathscr{L}_{0,i}$ is empty.
\end{proof}

We will now use the previous result to prove the incidence independence of $\mathscr{C}^*$-decompositions. The number $k(0)$ of subnetworks containing the zero complex turns out to be a useful tool for this. If $k(0)=0$, then the network does not contain the zero complex, hence the set of $\mathscr{C}^*$-decompositions is simply the set of $\mathscr{C}$-decompositions. For positive values, it can be used to formulate a convenient criterion for incidence independence:

\begin{corollary}
\label{cor:C*InciIndep}
Any $\mathscr{C}^*$-decomposition is incidence independent.
\end{corollary}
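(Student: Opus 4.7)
The plan is to apply the Structure Theorem for $\mathscr{C}^*$-decompositions (Theorem \ref{thm:STC*}) to split the verification of the defining identity
\[
n - l \;=\; \sum_i (n_i - l_i)
\]
into two pieces that can be handled separately: the linkage class $\mathscr{L}_0$ of $\mathscr{N}$ containing the zero complex together with its induced decomposition into the $\mathscr{L}_{0,i}$, and the complementary subnetwork $\mathscr{N}\setminus \mathscr{L}_0$ together with its induced decomposition into the $\mathscr{N}_i \setminus \mathscr{L}_{0,i}$. Let $k(0)$ denote the number of subnetworks that actually contain the zero complex. If $k(0) = 0$, then no complex is shared between distinct $\mathscr{N}_i$, so the $\mathscr{C}^*$-decomposition is already a $\mathscr{C}$-decomposition and Corollary \ref{cor:CInciIndep} finishes the argument. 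I will therefore assume $k(0) \geq 1$.

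For the complement piece, part (ii) of Theorem \ref{thm:STC*} delivers a genuine $\mathscr{C}$-decomposition of $\mathscr{N}\setminus\mathscr{L}_0$, so Corollary \ref{cor:CInciIndep} immediately yields incidence independence there. For the $\mathscr{L}_0$ piece the argument is purely combinatorial: by construction each $\mathscr{L}_{0,i}$ is a single linkage class of $\mathscr{N}_i$ (the unique one containing $0$), so $l(\mathscr{L}_{0,i}) = 1$ and there are $k(0)$ such contributions. Meanwhile the non-zero complexes of $\mathscr{L}_0$ are partitioned across the $\mathscr{L}_{0,i}$ by the $\mathscr{C}^*$-property, while the zero complex is counted once in each, giving $\sum_i n(\mathscr{L}_{0,i}) = (n(\mathscr{L}_0) - 1) + k(0)$. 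Subtracting the two identities yields $\sum_i (n(\mathscr{L}_{0,i}) - l(\mathscr{L}_{0,i})) = n(\mathscr{L}_0) - 1 = n(\mathscr{L}_0) - l(\mathscr{L}_0)$, i.e.\ incidence independence on the $\mathscr{L}_0$ piece.

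To finish, I will glue the two pieces. Writing $\mathscr{N} = \mathscr{L}_0 \cup (\mathscr{N}\setminus \mathscr{L}_0)$ and $\mathscr{N}_i = \mathscr{L}_{0,i} \cup (\mathscr{N}_i\setminus \mathscr{L}_{0,i})$ with the convention $n(\mathscr{L}_{0,i}) = l(\mathscr{L}_{0,i}) = 0$ when $\mathscr{N}_i$ does not contain $0$, the complex counts add directly and the linkage class counts add because linkage classes of $\mathscr{N}_i$ either lie entirely in $\mathscr{L}_{0,i}$ or entirely in $\mathscr{N}_i \setminus \mathscr{L}_{0,i}$ (the same remark applies to $\mathscr{N}$). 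Both sides of the target identity then split accordingly and match piece-by-piece.

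The main obstacle is simply the delicate bookkeeping of the shared zero complex: it inflates $\sum_i n(\mathscr{L}_{0,i})$ by exactly $k(0) - 1$ over $n(\mathscr{L}_0)$, and this is precisely cancelled by the extra $k(0) - 1$ linkage classes produced when $\mathscr{L}_0$ is split into the $\mathscr{L}_{0,i}$. Once the Structure Theorem isolates this region, everything else is arithmetic supplemented by Corollary \ref{cor:CInciIndep}.
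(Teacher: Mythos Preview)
Your proof is correct and follows essentially the same line as the paper's: both use the Structure Theorem for $\mathscr{C}^*$-decompositions to isolate the linkage class $\mathscr{L}_0$ containing the zero complex, and both hinge on the bookkeeping that the $k(0)-1$ extra copies of the zero complex appearing in $\sum_i n_i$ are exactly cancelled by the $k(0)-1$ extra linkage classes created when $\mathscr{L}_0$ splits into the $\mathscr{L}_{0,i}$. Your presentation is somewhat more modular---explicitly invoking Corollary~\ref{cor:CInciIndep} on the complement $\mathscr{N}\setminus\mathscr{L}_0$ rather than recounting linkage classes there directly---but the substance is the same.
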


\begin{proof}
We recall from Equation (\ref{eqn:incidence independence}) that
$$n-l = \sum (n_i -l_i)$$ where $n_i$ and $l_i$ are number of complexes and linkage classes in the subnetwork $\mathscr{N}_i$. 

Suppose $k(0) > 0$. If $k(0)$ subnetworks contain the zero complex then the number of complexes $n=n^*+1$, where $n^*$ is the number of non-zero complexes. For any $\mathscr{N}_i$ of the $k(0)$ subnetworks, for the corresponding numbers, we also have $n_i = n_i^* +1$, for all others, $n_i=n_i^*.$ Thus, Equation (\ref{eqn:incidence independence}) becomes
$$n^*+1-l=\sum n_i^* +k(0) - \sum l_i.$$
Since $n^*=\sum n^*_i$, we obtain $\sum l_i - l = k(0)-1.$
\end{proof}

According to the Structure Theorem, the linkage classes of $\mathscr{N}$ consist of $\mathscr{L}_0$ and such which contain only non-zero complexes, $\mathscr{L}_1, ... ,\mathscr{L}_{l -1}.$ Each of these remaining linkage classes must however be contained in exactly one subnetwork, and hence a linkage class of that subnetwork. Conversely, each linkage class in a subnetwork with only non-zero complexes is a linkage class of the whole network. Therefore $l =  1 + \sum l_i + \sum(l_i - 1),$ where the first sum is over all subnetworks not containing $0$ and the second over all subnetworks containing $0$. Therefore $l =  1 + \sum l_i - k(0),$ which is the formulated criterion for incidence independence above. 

\section{Species coverable CRNs and S-system realizations}

In this Section, we introduce a new realization of an S-system in order to enable a CRNT approach in the context of recent developments in BST on phenotype-oriented modeling based on design spaces. This realization will be defined in the framework of total realizations of BST systems in Section \ref{subsec:Total realization of BST}. In order to study the new and old S-system realizations in a semantically consistent manner, we introduce the class of species coverable CRNs and its subset of species decomposable CRNs. Our main result in this Section, Theorem \ref{thm:species decomposable CRNs}, corrects and extends a deficiency formula by Arceo et al. \cite{AJMM2015} and a result by Hernandez et al. \cite{HMR2020} to species decomposable CRNs.

\subsection{S-systems and their realizations}
We first review the current realization introduced by Arceo et al. in \cite{AJMM2015} and \cite{AJLM2017}.

\subsubsection{Current realization of an S-system}
To any given S-system, Arceo et al. associated the biochemical map (see Figure \ref{fig:S system}) and obtained CRNs which they called stoichiometric and total representations \cite{AJMM2015}. To obtain a realization, i.e. a dynamically equivalent kinetic system, they constructed in \cite{AJLM2017} the embedded network of the total representation given by the subsets of dependent species and the full reaction set. This realization, called the embedded representation, has the advantage of using the minimum number of species needed and corresponded to the BST practice of \enquote{lumping} the independent variables with the rate constants for each power law term.

\begin{figure}[h!]
\begin{center}
    \includegraphics[width=0.5\textwidth]{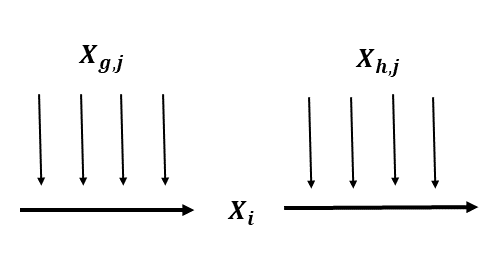}
\end{center}
\caption{Biochemical map of an $S$-system.}
\label{fig:S system}
\end{figure}

Let $R_i$ and $P_i$ be the sets of the variables regulating the input and output arrow for $X_i$ (as in Figure \ref{fig:S system}) and $\underline{R_i}, \underline{P_i}$ be the sums of the elements in $R_i$ and $P_i$ respectively. In this realization, the reaction subsets $\{\underline{R_i} \rightarrow X_i + \underline{R_i}, X_i + \underline{P_i} \rightarrow \underline{P_i}\}$ have the following property:

\begin{proposition}
The reaction sets form an independent decomposition of the embedded representation.
\end{proposition}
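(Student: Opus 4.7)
The plan is to verify the two defining conditions of an independent decomposition: first, that the given reaction subsets partition $\mathscr{R}$, and second, that the stoichiometric subspace $S$ decomposes as the direct sum $S_1 \oplus \cdots \oplus S_m$ of the subnetwork stoichiometric subspaces.

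For each subnetwork $\mathscr{N}_i$, I would compute the two reaction vectors directly:
$$(X_i + \underline{R_i}) - \underline{R_i} = X_i \quad \text{and} \quad \underline{P_i} - (X_i + \underline{P_i}) = -X_i,$$
which hold regardless of whether $X_i$ itself happens to appear in $R_i$ or $P_i$ (self-regulation). Hence $S_i = \spn\{X_i\}$ and $s_i = 1$. For the partition property, the $2m$ reactions are distinct by construction — each production/degradation pair is indexed by a different dependent variable $X_i$ — and together they exhaust the reaction set of the embedded representation, so the $\mathscr{R}_i$ do partition $\mathscr{R}$.

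For independence, I would observe that the overall stoichiometric subspace satisfies $S = \spn\{\pm X_i : i = 1, \ldots, m\}$, and since the embedded representation has species set exactly $\{X_1, \ldots, X_m\}$, the $X_i$ are the standard basis vectors of $\mathbb{R}^{\mathscr{S}}$. Therefore $s = m$, and the $S_i$, being the coordinate lines through distinct basis vectors, automatically form a direct sum. The defining equality $s_1 + \cdots + s_m = m = s$ follows immediately.

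The argument is essentially linear-algebraic bookkeeping and I do not anticipate a substantive obstacle. The one subtlety worth flagging is the ambient species space: the embedding step of \cite{AJLM2017} is what makes $\{X_1, \ldots, X_m\}$ the full species set, so that the reaction vectors $\pm X_i$ form an actual basis of $\mathbb{R}^{\mathscr{S}}$; if one instead worked in the total representation (which retains the independent variables as species), the same rank count would still yield the direct-sum conclusion, but one would have to note that the reaction vectors $\pm X_i$ have zero components in the independent-variable coordinates, so no $S_i$ is enlarged by those extra dimensions.
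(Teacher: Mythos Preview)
Your independence argument (computing $S_i=\spn\{X_i\}$ and counting dimensions) is exactly the paper's part (ii). The difference lies in the partition step: the paper does not take ``distinct by construction'' for granted but proves pairwise disjointness of the $\mathscr{R}_i$ by a coordinate-level case analysis (inflow vs.\ inflow, inflow vs.\ outflow), deriving contradictions of the form $a_i+1=a_i$ from hypothesised complex equalities. Your justification (``each pair is indexed by a different $X_i$'') does not by itself rule out, say, $\underline{R_i}\to X_i+\underline{R_i}$ coinciding with $X_j+\underline{P_j}\to\underline{P_j}$.

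That said, your own reaction-vector computation gives a cleaner fix than the paper's case analysis: equal reactions must have equal reaction vectors, and you have already shown that the reaction vectors in $\mathscr{R}_i$ are exactly $\{X_i,-X_i\}$. Since $X_i\neq\pm X_j$ for $i\neq j$ (distinct basis vectors), no reaction of $\mathscr{R}_i$ can equal one of $\mathscr{R}_j$. One sentence to this effect would close the gap and is arguably more transparent than the paper's argument.
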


\begin{proof}
i) We have to show that if $i\neq j$, then the intersection of the reaction sets is empty.
Suppose that the sets $\{\mathscr{R}'_i\}$ do not form a partition of the reaction set $\mathscr{R}'$. Then there exists two sets $\mathscr{R}'_i$ and $\mathscr{R}'_j$, where $i \neq j$, that has a common reaction. We consider the following cases: a) two inflow reactions coincide and b) an inflow reaction coincides with an outflow reaction. The remaining cases involve converse reactions and hence follow similarly.

We let $\mathscr{R}'_i=\{R_i \rightarrow X_i+R_i, X_i + P_i \rightarrow P_i\}$ and $\mathscr{R}'_j=\{R_j \rightarrow X_j+R_j, X_j + P_j \rightarrow P_j\}$. We denote the subvectors of $R_i, R_j, P_i$ and $P_j$ as  $V_i, V_j, W_i$ and $W_j$, respectively. We set $V_i=(a_1,...,a_m)$ and $W_j=(b_1,...,b_m)$. In connection to the $a$ elements of $V_i$, the two input reactions in $\mathscr{R}'_i$ and $\mathscr{R}'_j$ coincide thus $R_i=R_j$ and $X_i +R_i=X_j +R_j$. This implies that $V_i=V_j$ and $X_i + V_i=X_j+V_i$ or $(a_1,...,a_i+1,...,a_m)=(a_1,...,a_j+1,...,a_m)$. Since $i \neq j$, $a_i+1=a_i$ and $a_j=a_j+1$, a contradiction.

As for the $b$ elements of $W_j$, we assume that an inflow reaction in $\mathscr{R}'_i$ coincides with an $\mathscr{R}'_j$. Then $R_i=X_j+P_j$ and $X_i +R_i=P_j$. Thus, we have $V_i=X_j+W_j$ or $(a_1,...,a_j,...,a_m)=(b_1,...,b_j+1,...,b_m)$. This implies that $a_i=b_i$ and $a_j=b_j+1$. Similarly, $V_i+X_i=W_j$ implies that $a_i+1=b_i$ and $a_j=b_j$. Since $i \neq j$, $a_i=b_i=a_i+1$ and $b_j=a_j=b_j+1$, a contradiction.

ii) Note that the stoichiometric subspace $S_i$ of each of the $m$ subnetworks $\mathscr{Ri}'_i$ of the species decomposition is $\{X_i\}$. Thus, the rank of each $\mathscr{Ri}'_i$ is 1. Since there are $m$ subnetworks and the rank of an $S$-system is m, $s=m=s_1+...+s_m$ and this implies independence. 
\end{proof}

\subsubsection{Total realization of a BST system}
\label{subsec:Total realization of BST}

We now introduce an additional realization for any BST system, in particular, any GMA system given by a biochemical map:

\begin{definition}
The total realization of a BST system is the total representation with an additional outflow reaction for each independent variable together with the power law kinetics specified by the kinetic order matrix.
\end{definition}

The additional outflow reaction for each independent variable enables the corresponding ODE $\frac{dX_i}{dt} = 0$ to be solvable in all cases, hence, resulting in a realization of the GMA system. Clearly, the total realization has the same sets of species and complexes as the total representation, but has $m_I$ additional reactions $(m_I =$ number of independent species).

\begin{example}
 The total realization derived from the total representation of an S-system (as reviewed in Section \ref{subsec:Total realization of BST}) will be denoted as the independent realization of the S-system, in order to distinguish it from the new realizations to be introduced in Section \ref{subsec:subnetwork realization of S}.
 \end{example}
 
Note that the embedded networks formed by the subset of dependent species and all reactions of total representations and the total realizations are identical.  For consistency in terminology, we will henceforth denote the embedded representations as \enquote{embedded realizations} of the BST systems.

\subsection{Subnetwork realization of an S-system}
\label{subsec:subnetwork realization of S}
In BST´s design space theory \cite{SCFTS2009}, a phenotype-oriented analysis of the behavior of a biochemical system described by a GMA model is conducted by constructing S-subsystems and identifying parameter regions where the S-subsystem is dominant, i.e. its values determine the behavior. The initial step in design space theory selects for each dependent variable, a positive (activating) and a negative (degrading) term from its ODE. In the GMA system´s biochemical map, this amounts to selecting an input arrow and an output arrow for each dependent variable. In the CRN of the total realization, we obtain a subnetwork defined as follows: 

\begin{definition}
The \textbf{subnetwork realization} of an S-subsystem of a GMA system is the total realization of the union of the subnetworks generated by the reaction pairs $\{ X_{i,\rho} + S_i \rightarrow X_i + S_i, X_i + P_i \rightarrow X_{i,\pi} + P_i\}$ for each dependent species $X_i$, where $S_i$, $P_i$ being the sums of the corresponding regulatory species of the input and output arrows. The sets of reaction pairs form the species covering of the subnetwork realization.
\end{definition}

As the simple example, $0\rightarrow X \rightarrow Y \rightarrow Z \rightarrow 0$ shows, this species covering is not necessarily a decomposition, since several inflow reactions coincide with previous outflow reactions: in fact, there are only $4$ reactions instead of $6 = 2 \times 3.$
  
\begin{remark}
The terminology \enquote{independent realization} of an S-system highlights the fact it is represented independently of any containing network/system. Fortuitously, the subsets of reaction pairs also form an independent decomposition of the independent realization.
\end{remark}

\subsection{Species coverable and species decomposable CRNs}
Despite its contrasting semantic interpretation to the subnetwork realization, we readily observe that formally the independent realization is a special case  of the subnetwork realization: if we set $X_{i,\rho} = X_{i,\pi} = 0,$ we obtain the independent realization. In order to have a consistent semantic framework, we abstract a level further and introduce a class of CRNs containing the networks of both realizations. We then use this class to formulate and derive common properties.

\begin{definition}
\label{def:speciescoverable}
\begin{enumerate}[i)]
\item A CRN with species set $\mathscr{S} = \{X_1,... ,X_m\}$ is \textbf{species coverable} if for each $X_i$, there are species  $X_{i,\rho}, X_{i,\pi} \in (\mathscr{S} \setminus \{X_i\}) \cup \{0\}$ and subsets  $R_i, P_i$ of $\mathscr{S}$  with $\underline{R_i}, \underline{P_i}$ be the sums of their respective elements such that $\mathscr{R}$ is the union of $\mathscr{R}_1,... ,\mathscr{R}_m$ with $\mathscr{R}_i = \{ X_{i,\rho} + \underline{ R_i} \rightarrow X_i + \underline{R_i}, X_i + \underline{P_i} \rightarrow X_{i,\pi} + \underline{P_i}\}.$  The $\mathscr{R}_i$'s  form the species covering of the network.
\item A species is called independent if $X_{i,\rho} = X_{i,\pi} = 0$ and $R_i = P_i =\phi$ (hence by convention, $\underline{R_i} = \underline{P_i} = 0).$ Otherwise, it is a dependent species. A species is reversible if $X_{i,\rho} = X_{i,\pi}$ (hence all independent species are reversible).
\item A species coverable CRN is \textbf{species decomposable} if the species covering is an independent decomposition.
\end{enumerate} 
\end{definition}

\begin{example}
\begin{enumerate}
\item The CRN of the subnetwork realization of an S-system is species coverable
\item The CRN of the independent realization of an S-system is species decomposable.
\end{enumerate}
\end{example}

We have the following main result for species decomposable CRNs:

\begin{theorem}
\label{thm:species decomposable CRNs}
Let $\mathscr{N}$ be a species decomposable CRN. Then 
\begin{enumerate} [i)]
\item $\delta \leq m-m_{rev},$ where $m_{rev}$ is the number of reversible species. If the species decomposition is bi-independent, then $\delta = m-m_{rev}.$
\item the fundamental decomposition of $\mathscr{N}$   is the species decomposition.
\end{enumerate}
\end{theorem}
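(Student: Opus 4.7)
My plan is to reduce both parts to a case analysis on the two reactions that Definition \ref{def:speciescoverable} attaches to each species $X_i$ and then to apply the bi-independence machinery developed earlier in the paper. For (i), I would first compute the invariants $n_i, l_i, s_i$ of each subnetwork $\mathscr{N}_i = (\mathscr{S}_i, \mathscr{C}_i, \mathscr{R}_i)$ generated by the reactions $X_{i,\rho} + \underline{R_i} \to X_i + \underline{R_i}$ and $X_i + \underline{P_i} \to X_{i,\pi} + \underline{P_i}$. The four candidate complexes give $n_i \leq 4$ and $l_i \leq 2$, while the two reaction vectors $X_i - X_{i,\rho}$ and $X_{i,\pi} - X_i$ satisfy $s_i = 1$ when $X_i$ is reversible and $s_i = 2$ otherwise. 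A split on reversibility and on whether $\underline{R_i} = \underline{P_i}$ then produces $\delta_i = n_i - l_i - s_i \in \{0,1\}$ in every case, with the pattern of zeros and ones determined by the reversibility classification. Summing and using species decomposability ($s = \sum s_i$) together with the independent-decomposition inequality $\delta \leq \sum \delta_i$ from Fortun et al.\ yields the bound $\delta \leq m - m_{rev}$; under additional incidence independence (bi-independence), the proposition characterizing bi-independent decompositions upgrades this to the equality $\delta = \sum \delta_i = m - m_{rev}$.

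For (ii), the strategy is to recall the characterization of the fundamental decomposition from Hernandez et al.\ \cite{HMR2020} and verify that for a species decomposable CRN its equivalence classes coincide with the species classes $\mathscr{R}_i$. One direction, that two reactions of the same $\mathscr{R}_i$ lie in the same fundamental class, is immediate from the construction of $\mathscr{R}_i$: its inflow and outflow reactions both feature the species $X_i$ and therefore pair in the fundamental relation. For the other direction, the hypothesis that $\{\mathscr{R}_i\}$ is a decomposition whose subnetwork stoichiometric subspaces sum directly to $S$ prevents a fundamental equivalence from crossing distinct subnetworks, forcing the fundamental classes to be no coarser than $\{\mathscr{R}_i\}$; combined with the first inclusion, this gives equality.

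The main obstacle is the combinatorial bookkeeping in the case analysis for (i): inside a single $\mathscr{R}_i$ the four candidate complexes can collapse, most notably when $\underline{R_i} = \underline{P_i}$, which changes $n_i$ and $l_i$ and therefore $\delta_i$. Aligning the four subcases $(X_{i,\rho} \stackrel{?}{=} X_{i,\pi},\; \underline{R_i} \stackrel{?}{=} \underline{P_i})$ with the paper's notion of a reversible species so that $\sum \delta_i$ actually lands on $m - m_{rev}$ is the delicate step; once this accounting is pinned down, the upper bound, its equality under bi-independence, and the identification in (ii) all reduce to direct verification.
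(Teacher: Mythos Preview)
Your plan for (i) has the same architecture as the paper's proof---compute each $\delta_i$ and sum---but the execution contains a genuine error in the value of $s_i$. You assert that $s_i = 1$ when $X_i$ is reversible and $s_i = 2$ otherwise, yet this is incompatible with the very hypothesis you later invoke. Species decomposability means the species covering is an \emph{independent} decomposition, so $s = \sum_{i=1}^m s_i$; since $S \subseteq \mathbb{R}^{\mathscr{S}}$ gives $s \le m$ and each $s_i \ge 1$, one is forced to have $s_i = 1$ for \emph{every} $i$, reversible or not. Thus for an irreversible species the two reaction vectors $X_i - X_{i,\rho}$ and $X_{i,\pi} - X_i$ are automatically linearly dependent---a nontrivial constraint imposed by independence, and exactly the scalar $\lambda_i$ the paper exploits in part (ii). With the correct value $s_i = 1$, the paper's computation is immediate: $\delta_i = 2-1-1 = 0$ for reversible species and $\delta_i = 4-2-1 = 1$ or $3-1-1 = 1$ for irreversible ones, so $\sum \delta_i = m - m_{rev}$. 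If instead you carry through $s_i = 2$ for irreversible species, every irreversible $\delta_i$ drops to $0$ and the sum no longer lands on $m - m_{rev}$; your four-way split on $(X_{i,\rho}\stackrel{?}{=}X_{i,\pi},\,\underline{R_i}\stackrel{?}{=}\underline{P_i})$ cannot repair this.

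For (ii), your outline is too loose to stand as a proof. The claim that the inflow and outflow of $\mathscr{R}_i$ ``both feature $X_i$ and therefore pair in the fundamental relation'' does not match the definition of the $\mathscr{F}$-equivalence in \cite{HMR2020}, which is phrased via an orientation $\mathscr{O}$ and the spaces $\ker L_{\mathscr{O}}$ and $\ker^{\perp} L_{\mathscr{O}}$, not via shared species. The paper proceeds concretely: using $s_i = 1$ it writes $X_i - X_{i,\rho} = \lambda_i(X_{i,\pi} - X_i)$, exhibits the vectors $\omega_i - \lambda_i \omega_{-i}$ as a basis of $\ker L_{\mathscr{O}}$ and $\lambda_i \omega_i + \omega_{-i}$ (together with the reversible-pair representatives) as a basis of $\ker^{\perp} L_{\mathscr{O}}$, and then checks the equivalence classes directly against these bases. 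Your second direction---that independence of the $S_i$ prevents fundamental equivalences from crossing subnetworks---is morally the statement that the $\mathscr{F}$-decomposition refines every independent decomposition, but you would need to invoke or prove that fact rather than leave it implicit.
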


\begin{proof}
\begin{enumerate}[i)]
\item Since the species decomposition is independent, $s = s_1 +...+ s_m$. Since $s_i \geq 1$, this implies that  $s_i = 1$ and $s = m$.  For a reversible species $X_i$, we have $\delta_i = 2-1-1 = 0$. For an irreversible species, we have either $\delta_i = 4-2 -1 = 1$ or $\delta_i = 3-1 - 1 = 1$. Hence $\sum \delta_i = m- m_{rev}.$ Hence, $\delta \leq m - m_{rev}$ since the decomposition is independent. If it is also incidence independent, then $=$ holds.
\item We denote the inflow reaction in $\mathscr{R}_i$ with $r_{-i}$, and the corresponding basis vector with $\omega_i$ and $\omega_{-i}$, respectively. We set $m':=m-m_{rev}$, and as remarked above, since the species decomposition is independent, the network is open (i.e. $s = m$ and $\dim S_i = 1$ for each $i$. Hence for any orientation, the cardinality $= 2m- m_{rev}$, and $\dim Ker L_\mathscr{O} = m- m_{rev}$. For each irreversible species $X_i$, we can write  $$X_i - X_{i,\rho} =  \lambda_i (X_{i,\pi} - X_i) \Leftrightarrow (1 + \lambda_i) X_i = X_{i,\rho} + \lambda_iX_{i,\pi}.$$ We claim that the vectors $\omega_i - \lambda_i\omega_{-I}$ lie in $$Ker L_\mathscr{O}: L_\mathscr{O} (\omega_i - \lambda_i\omega_i) = X_i -X_{i,\rho} -\lambda_i(X_{i,\pi}-X_i)=0.$$ They are linearly independent and hence form a basis. On the other hand, the $m$ vectors $\lambda_i\omega_i + \omega_I , \chi_j$ with $i=1,2,...,m',$ and $j=1,2,...,m_{rev}$ and $\chi_i$ the reaction from a reversible pair included in the orientation, form a basis for $Ker^{\perp}L_{\mathscr{O}}.$ From the $\mathscr{F}$-decomposition definition, the reactions $\omega_i$ and $\omega_{-i}$ are equivalent, $i=1,2,...,m.$ If $k \neq i,\langle \omega_k - \alpha \omega_i,\omega_i+\omega_{-i} \rangle = -\alpha,$ so that if $\alpha$ is nonzero, then the $k$-th inflow reaction is not equivalent. Similarly, the $k$-th outflow reaction is not equivalent. Hence, the $\mathscr{F}$-equivalence classes are precisely the $\mathscr{R}_i's.$ 
\end{enumerate}
\end{proof}

\begin{remark}
Since in Proposition 3.19 of \cite{HMR2020}, $\lambda_i=-1$ for all $l$, there is a typo in the proof: instead of \enquote{$... \omega_i + \omega_{-l},\chi_j ...$} it should read  \enquote{$... -\omega_i+\omega_{-l},\chi_j...$}.
\end{remark}


Finally, we note the following new formulation of Theorem 3 in \cite{FML2020}.

\begin{theorem}
Any species coverable CRN with two or more dependent species is discordant.
\end{theorem}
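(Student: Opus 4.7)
The approach is to exhibit an explicit discordance certificate by exploiting the paired reaction structure available at each dependent species, adapting the argument of Theorem 3 in \cite{FML2020} to the general species coverable setting. Recall (Shinar--Feinberg) that a network is discordant precisely when one can produce $\sigma \in S$ and $\alpha$ compatible with some orientation $\mathscr{O}$ of $\mathscr{R}$, not both zero, such that for every reaction $r$ the sign of $\alpha(r)$ is compatible in the prescribed way with the sign pattern of $\sigma$ on the support of $\rho(r)$. The hypothesis of at least two dependent species is what supplies the freedom needed to build such a witness.

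The plan is as follows. Fix two dependent species $X_1, X_2$ with their species covering pairs $\mathscr{R}_1 = \{r_1^+, r_1^-\}$ and $\mathscr{R}_2 = \{r_2^+, r_2^-\}$ and associated reaction vectors $v_i^+ = X_i - X_{i,\rho}$, $v_i^- = X_{i,\pi} - X_i$. I would choose $\sigma \in S$ as a linear combination $\sum c_i^\pm v_i^\pm$ whose $X_1$- and $X_2$-coordinates are nonzero and of opposite sign; the four scalars $c_i^\pm$ give enough slack to also control the entries of $\sigma$ at the auxiliary species $X_{i,\rho}, X_{i,\pi}$ and at species appearing in the regulator sums $\underline{R_i}, \underline{P_i}$. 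Simultaneously I would take $\alpha$ supported on $\mathscr{R}_1 \cup \mathscr{R}_2$ and select its four values so that the sign of $\alpha$ on each of $r_i^\pm$ conflicts with the sign of $\sigma$ at $X_i$ in the Shinar--Feinberg sense, while imposing $\alpha \in \ker L_{\mathscr{O}}$---a single linear constraint which the four unknowns can easily satisfy.

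The remaining step is to verify the sign conditions reaction by reaction. For any reaction outside $\mathscr{R}_1 \cup \mathscr{R}_2$ we have $\alpha(r) = 0$, so we need $\rho(r)$ either to avoid $\supp(\sigma)$ entirely, or to contain both a positive and a negative coordinate of $\sigma$; the opposite signs at $X_1$ and $X_2$ built into $\sigma$ make this automatic in the generic case. The main obstacle will be the combinatorial bookkeeping---tracking when $X_1$ or $X_2$ occurs in the regulator sums $\underline{R_j}, \underline{P_j}$ of other species, when $X_{i,\rho}$ or $X_{i,\pi}$ coincides with one of them, and how this interacts with irreversibility of the pair $\mathscr{R}_i$. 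If the construction fails for a particular pair $(X_1, X_2)$ because of such coincidences, one re-selects; the hypothesis $m \geq 2$ together with the finiteness of the network guarantees that a workable pair exists, exactly as in the specific situation handled by \cite{FML2020}.
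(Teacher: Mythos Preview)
The paper does not actually prove this theorem; it is stated as a reformulation of Theorem~3 in \cite{FML2020} with no argument given beyond that citation, so there is no in-paper proof to compare your proposal against.

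Your sketch, however, has gaps that go beyond missing bookkeeping. First, the assertion that ``$\alpha \in \ker L_{\mathscr{O}}$ [is] a single linear constraint which the four unknowns can easily satisfy'' is false: $L\alpha = 0$ imposes one linear condition per species appearing in the four reaction vectors $X_i - X_{i,\rho}$, $X_{i,\pi} - X_i$ ($i=1,2$). When $X_1, X_2, X_{1,\rho}, X_{1,\pi}, X_{2,\rho}, X_{2,\pi}$ are six distinct nonzero species this is six homogeneous equations in four unknowns, forcing $\alpha = 0$ and collapsing the construction. Second, the handling of reactions outside $\mathscr{R}_1 \cup \mathscr{R}_2$ does not go through. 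Take the outflow reaction of a third dependent species $X_j$, with reactant complex $X_j + \underline{P_j}$. If $X_1 \in P_j$ but neither $X_2$ nor any other element of $\supp(\sigma)$ lies in $\{X_j\} \cup P_j$, then $\sigma$ has a single nonzero sign on that reactant support and the Shinar--Feinberg condition for $\alpha_r = 0$ fails there. Nothing in your choices prevents this configuration. Your fallback---re-select the pair, finiteness guarantees a workable one---is an assertion without justification: you offer no reason why every pair cannot be obstructed simultaneously by such regulatory occurrences. A valid proof would need either a structural lemma ensuring some unobstructed pair always exists, or a different witness $(\alpha,\sigma)$ altogether.
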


\begin{remark} For positive equilibria, we have the following hierarchy of subsets and CRN classes in which they may exist:  detailed balanced (DB) equilibria (reversible CRNs) $\subset$ complex balanced (CB) equilibria (weakly reversible CRNs) $\subset$ positive equilibria (any CRN). This hierarchy corresponds to balance of the level of reactions (DB), to balance on the level of complexes (CB) and balance on the level of species.
\end{remark}

With the introduction of species decomposable CRNs, we obtain the following (restricted) conceptual hierarchy:
\begin{itemize}
\item (level of reactions), for any CRN, any decomposition determines (is in fact equal to) a partition of the set of reaction
\item (level of complexes), for any CRN, any $\mathscr{C}$-decomposition, in addition, determines a partition of the set of complexes
\item (level of species) for any species decomposable CRN, the fundamental decomposition (= species decomposition) determines a partition of the set of species (into singletons)
\end{itemize}

The restriction is of course that the species level is valid only for a small class of CRNs.


\section{Conclusions and outlook}
This paper presents novel decomposition classes of chemical reaction networks (CRNs) derived from S-system kinetics:

We introduced the concept of coverings, a minor generalization of a decomposition, and relate it to the unions of CRNs. A covering is called a \textbf{decomposition of $\mathscr{N}$} if the sets $\mathscr{R}_i$ form a partition of $\mathscr{R}$. Given the network covering properties, we introduced the basic property of incidence independent decompositions of a CRN, which complements the independence property defined by M. Feinberg in 1987 \cite{FEIN1987}. We have shown in Proposition~\ref{prop:deficiency of incidence independent} that for an incidence independent decomposition $\mathscr{N}=\mathscr{N}_1 \cup...\cup\mathscr{N}_k$, $\delta \geq \delta_1+...+\delta_k$.

In this paper, we have presented the following new results:
\begin{itemize}
\item The theory of $\mathscr{C}$- and $\mathscr{C}^*$- decompositions which partition the set of complexes and the set of nonzero complexes respectively, including their structure theorems in terms of linkage classes (shown in Theorem~\ref{thm:Structure thm for C-decomposition} and Theorem~\ref{thm:STC*}, respectively). We have shown that $\mathscr{C}$ and $\mathscr{C}^*$- decompositions are both incident independent (shown in Corollary~\ref{cor:CInciIndep} and Corollary~\ref{cor:C*InciIndep}, respectively). 
\item Analogous to Feinberg's independent decomposition, we demonstrate the important relationship between sets of complex balance equilibria for an incidence independent decomposition of weakly reversible subnetworks for any kinetics (Theorems \ref{theorem:decomposition and complex balanced equilibria} and \ref{theorem:complex balanced equilibria}).
\item We have introduced a new realization for an S-system that is analyzed using a newly defined class of species coverable CRNs (see Definition~\ref{def:speciescoverable}). This led to the extension of the deficiency formula and characterization of fundamental decompositions to species decomposable reaction theorem (Theorem \ref{thm:species decomposable CRNs}).
\end{itemize}


\vspace{10pt}

\noindent \textbf{Acknowledgments}. HFF acknowledges the support of the Commission on Higher Education (CHED), Philippines for the CHED-SEGS Scholarship Grant. ARL held research fellowship from De La Salle University and would like to acknowledge the support of De La Salle University’s Research Coordination Office.


\baselineskip=0.25in

\end{document}